\theoremstyle{plain}
\newtheorem{theorem}{Theorem}[section]
\newtheorem{lemma}[theorem]{Lemma}
\newtheorem{corollary}[theorem]{Corollary}
\newtheorem{definition}[theorem]{Definition}
\newtheorem{algorithm}[theorem]{Algorithm}
\newlength\savewidth
\def\a{\alpha}
\def\b{\beta}
\def\d{\delta}
\def\s{\sigma}
\def\t{\tau}
\begin{document}

\title[On several problems about automorphisms
of the free group of rank two] {On several problems about
automorphisms of the free group of rank two}
\author{Donghi Lee}
\address{Department of Mathematics\\
Pusan National University \\
San-30 Jangjeon-Dong, Geumjung-Gu, Pusan, 609-735, Korea}
\email{donghi@pusan.ac.kr}

\subjclass[2000]{Primary  20E36, 20F05, 20F10, 20F28}

\begin{abstract} Let $F_n$ be a free group of rank $n$. In this paper
we discuss three algorithmic problems related to automorphisms
of $F_2$.

A word $u$ of $F_n$ is called positive if $u$ does not have
negative exponents. A word $u$ in $F_n$ is called potentially
positive if $\phi(u)$ is positive for some automorphism $\phi$ of
$F_n$. We prove that there is an algorithm to decide whether or
not a given word in $F_2$ is potentially positive, which gives an
affirmative solution to problem F34a in [1] for the case of $F_2$.

Two elements $u$ and $v$ in $F_n$ are said to be boundedly
translation equivalent if the ratio of the cyclic lengths of
$\phi(u)$ and $\phi(v)$ is bounded away from $0$ and from $\infty$
for every automorphism $\phi$ of $F_n$. We provide an algorithm to
determine whether or not two given elements of $F_2$ are boundedly
translation equivalent, thus answering question F38c in the online
version of [1] for the case of $F_2$.

We further prove that there exists an algorithm to decide whether
or not a given finitely generated subgroup of $F_2$ is the fixed
point group of some automorphism of $F_2$, which settles problem
F1b in [1] in the affirmative for the case of $F_2$.
\end{abstract}
\maketitle

\section{Introduction}

Let $F_n$ be the free group of rank $n \ge 2$ with basis $\Sigma$.
In particular, if $n=2$, we let
$\Sigma=\{a, b\}$, namely, $F_2$ is the free group with basis
$\{a, b\}$. A word $v$ in $F_n$ is called {\it cyclically reduced} if all its
cyclic permutations are reduced. A {\it cyclic word} is defined to
be the set of all cyclic permutations of a cyclically reduced
word. By $[v]$ we denote the cyclic word associated with a word
$v$. Also by $\|v\|$ we mean the length of the cyclic word $[v]$
associated with $v$, that is, the number of cyclic permutations of
a cyclically reduced word which is conjugate to $v$. The length
$\|v\|$ is called the {\it cyclic length} of ~$v$. For two
automorphisms $\phi$ and $\psi$ of $F_n$, by writing $\phi \equiv
\psi$ we mean the equality of $\phi$ and $\psi$ over all cyclic
words in $F_n$, that is, $\phi(w)=\psi(w)$ for every cyclic word
$w$ in $F_n$.

Recall that a {\it Whitehead automorphism} $\a$ of $F_n$ is
defined to be an automorphism of one of the following two types
(cf. \cite{lee}):

\begin{itemize}
\item[(W1)] $\a$ permutes elements in $\Sigma^{\pm 1}$.

\item[(W2)] $\a$ is defined by a letter $x \in \Sigma^{\pm 1}$ and
a set $S \subset \Sigma^{\pm 1} \setminus \{x, x^{-1}\}$ in such a
way that if $c \in \Sigma^{\pm 1}$ then (a) $\a(c)=cx$ provided $c
\in S$ and $c^{-1} \notin {S}$; (b) $\a(c)=x^{-1}cx$ provided both
$c,\, c^{-1} \in S$; (c) $\a(c)=c$ provided both $c,\, c^{-1}
\notin S$.
\end{itemize}
If $\a$ is of type (W2), we write $\a=(S, x)$. Note that in the expression of $\a=(S, x)$ it is conventional to
include the defining letter $x$ in the defining set $S$, but for the sake of brevity of notation we will
omit $a$ from $S$ as defined above.

Throughout the present paper, we let
\[
\s=(\{a\}, b), \quad \t=(\{b\}, a)
\]
be Whitehead automorphisms of type (W2) of $F_2$. Recently the
author ~\cite{lee} proved that every automorphism of $F_2$ can
represented in one of two particular types over all cyclic words
of $F_2$ as follows:

\begin{lemma} \label{lem:2.3} ([Lemma 2.3, 6])
For every automorphism $\phi$ of $F_2$, $\phi$ can be represented
as $\phi \equiv \b \phi'$, where $\b$ is a Whitehead automorphism
of $F_2$ of type (W1) and $\phi'$ is a chain of one of the forms
\[
\begin{aligned}
(C1) \ & \phi' \equiv \t^{m_k}\s^{l_k} \cdots \t^{m_1}\s^{l_1} \\
(C2) \ & \phi' \equiv \t^{-m_k}\s^{-l_k} \cdots \t^{-m_1}\s^{-l_1}
\end{aligned}
\]
with $k \in \mathbb {N}$ and both $l_i, \, m_i \ge 0$ for every
$i=1, \dots, k$.
\end{lemma}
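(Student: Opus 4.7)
The plan is to work throughout modulo inner automorphisms, since these act trivially on cyclic words, and thereby reduce the statement to one about $\mathrm{Out}(F_2) \cong \mathrm{GL}_2(\mathbb{Z})$. Under this identification $\s$ and $\t$ correspond to the two elementary matrices $E_{21}(1)$ and $E_{12}(1)$, a chain of form (C1) corresponds to an element of the positive submonoid $\mathrm{SL}_2(\mathbb{N})$, a chain of form (C2) to the analogous ``negative'' submonoid, and the type (W1) automorphisms project onto the finite group of signed permutation matrices. The goal is then to realize an arbitrary class in $\mathrm{GL}_2(\mathbb{Z})$ as a signed permutation times a monosign product.

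First I would apply Whitehead's theorem to write $\phi = \a_r \cdots \a_1$ as a product of Whitehead automorphisms and then normalize each (W2) factor. For $F_2$ the complement $\Sigma^{\pm 1}\setminus\{x,x^{-1}\}$ has only two elements, so a type (W2) automorphism $(S,x)$ is determined by one of four subsets of a two-element set; a direct case check shows that each such $(S,x)$, modulo inner automorphisms, equals a single element of $\{\s^{\pm 1},\t^{\pm 1}\}$ composed with some type (W1) automorphism. This rewrites
\[
\phi \;\equiv\; \g_0\,\eta_1\,\g_1\,\eta_2\,\g_2 \cdots \eta_s\,\g_s,
\]
with $\g_j$ of type (W1) and each $\eta_j\in\{\s^{\pm 1},\t^{\pm 1}\}$.

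Next I would sweep all type (W1) factors to the left. The key observation is that $\g\eta\g^{-1}\in\{\s^{\pm1},\t^{\pm1}\}$ for every such $\g$ and $\eta$; this is a finite verification, transparent in $\mathrm{GL}_2(\mathbb{Z})$, where conjugating an elementary matrix by a signed permutation either fixes it, inverts it, or interchanges lower and upper triangular. Iterating this commutation rewrites $\phi$ as $\b\cdot \eta_1'\cdots\eta_s'$, where $\b$ is of type (W1) and each $\eta_i' \in \{\s^{\pm 1},\t^{\pm 1}\}$.

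The heart of the argument is the final step, converting the mixed-sign chain $\eta_1'\cdots\eta_s'$ into a monosign one. By direct computation one verifies rewrites of the form
\[
\s\t^{-1} \equiv \b_1\,\s^{-1}, \qquad \t^{-1}\s \equiv \b_2\,\t,
\]
together with their analogues obtained by interchanging $\s$ with $\t$ or swapping signs; each such identity replaces an adjacent pair of opposite sign with a single letter of uniform sign while emitting a type (W1) factor, which is then reabsorbed into $\b$ via the commutation trick of the previous step. Taking as complexity the lexicographic pair (number of sign alternations, total chain length), each rewrite strictly decreases the complexity, so the process terminates at a chain of form (C1) or (C2). The main obstacle is this last step: one must verify case-by-case that the emitted (W1) factors, when pushed back to the left, never interact with subsequent reductions in a way that re-introduces sign alternations, and that the termination argument genuinely controls every configuration that can arise during the rewriting.
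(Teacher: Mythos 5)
This paper does not actually prove Lemma~\ref{lem:2.3} --- it is imported from \cite{lee} --- but Section~4 reproves a refinement of it as Corollary~\ref{cor:5.2}, and your outline is essentially that argument: your normalization of type (W2) automorphisms is Lemma~\ref{lem:5.0.5}, your sweeping of (W1) factors is the role played by Lemma~\ref{lem:5.1}, and your sign-correcting identities are exactly Lemma~\ref{lem:5.1.5} read modulo the inner automorphisms $\d_1=(\{a^{\pm1}\},b)=\pi_b$ and $\d_3=(\{b^{\pm1}\},a)=\pi_a$ (e.g.\ the paper's $\s\t^{-1}=\pi\d_1\s^{-1}$ is your $\s\t^{-1}\equiv\b_1\s^{-1}$ with $\b_1=\pi$). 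Your one genuine departure is the reduction to $\mathrm{Out}(F_2)\cong\mathrm{GL}_2(\mathbb{Z})$, which is legitimate here --- $\equiv$ is precisely equality in $\mathrm{Out}(F_2)$, since an automorphism of a free group preserving all conjugacy classes is inner --- and makes every verification a finite matrix computation; the paper avoids this only because Section~4 later needs the finer information carried by the $\d_i$'s, which is invisible on cyclic words. The obstacle you flag at the end does need to be addressed, but it closes in one line: for a signed permutation matrix $P$ one has $Pe_{12}P^{-1}=\epsilon\, e_{p(1)p(2)}$ and $Pe_{21}P^{-1}=\epsilon\, e_{p(2)p(1)}$ with the \emph{same} sign $\epsilon$ (the product of the two nonzero entries of $P$), so conjugation by a type (W1) automorphism permutes $\{\s^{\pm1},\t^{\pm1}\}$ either preserving the sign of every element or negating the sign of every element. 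Hence pushing an emitted (W1) factor leftward never changes the number of sign alternations in the prefix or at the splice point; each application of a rule from Lemma~\ref{lem:5.1.5} then removes exactly the one alternation inside the rewritten pair, and a free cancellation $\s^{\pm1}\s^{\mp1}$ or $\t^{\pm1}\t^{\mp1}$ shortens the word without increasing the alternation count. So your lexicographic measure does strictly decrease and the rewriting terminates in a chain of type (C1) or (C2). With that observation supplied, your proof is complete and is, modulo the passage to $\mathrm{GL}_2(\mathbb{Z})$, the same proof the paper gives for Corollary~\ref{cor:5.2}.
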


With the notation of Lemma ~\ref{lem:2.3}, we define the {\it
length of an automorphism} $\phi$ of $F_2$ as $\sum_{i=1}^k
(m_i+l_i)$, which is denoted by $|\phi|$. Then obviously
$|\phi|=|\phi'|$.

In the present paper, with the help of Lemma ~\ref{lem:2.3}, we resolve three algorithmic
problems related to automorphisms of $F_2$. Indeed, the description of automorphisms $\phi$
of $F_2$ in the statement of Lemma ~\ref{lem:2.3} provides us with a very useful computational tool that
facilitates inductive arguments on $|\phi|$ in the proofs of the problems.

The first problem we deal with is about potential positivity of
elements in a free group the notion of which was first introduced
by Khan ~\cite{khan}.

\begin{definition}
A word $u$ of $F_n$ is called positive if $u$ does not have
negative exponents. A word $u$ in $F_n$ is called potentially
positive if $\phi(u)$ is positive for some automorphism $\phi$ of
$F_n$.
\end{definition}

It was shown by Khan ~\cite{khan} and independently by Meakin-Weil
~\cite{meakin} that the Hanna Neumann conjecture is satisfied if
one of the subgroups is generated by positive elements. 

In Section 2, we shall describe an algorithm to decide whether or
not a given word in $F_2$ is potentially positive, which gives an
affirmative solution to problem F34a in [1] for the case of $F_2$.

The second problem we discuss here is related to the notion of
bounded translation equivalence which is one of generalizations of the notion
of translation equivalence, due to
Kapovich-Levitt-Schupp-Shpilrain ~\cite{kapovich}.

\begin{definition} Two elements $u$ and $v$ in
$F_n$ are called translation equivalent in $F_n$ if
$\|\phi(u)\|=\|\phi(v)\|$ for every automorphism $\phi$ of $F_n$.
\end{definition}

Several different sources of translation equivalence in free
groups were provided by Kapovich-Levitt-Schupp-Shpilrain ~\cite{kapovich} and
the author ~\cite{lee2}. In another paper of the author ~\cite{lee}, it is proved that
there exists an algorithm to decide whether
or not two given elements $u$ and $v$ of $F_2$ are translation
equivalent. In contrast with the notion of translation
equivalence, bounded translation equivalence is defined as
follows:

\begin{definition}
Two elements $u$ and $v$ in $F_n$ are said to be boundedly
translation equivalent in $F_n$ if there is $C>0$ such that
\[{1 \over C} \le {\|\phi(u)\| \over \|\phi(v)\|} \le C
\]
for every automorphism $\phi$ of $F_2$.
\end{definition}

Clearly every pair of translation equivalent elements in $F_n$ are boundedly translation
equivalent in $F_n$, but not vice versa. As one of specific examples of volume equivalence,
we mention that two elements $a$ and $a[a, b]$ are boundedly translation equivalent in $F_2$.
Indeed, if $u=a$ and $v=a[a, b]$, then we have, in view of Lemma ~\ref{lem:2.3}, that
\[{1 \over 5} \le {\|\phi(u)\| \over \|\phi(v)\|} \le 1
\]
for every automorphism $\phi$ of $F_2$.

In Section 3, developing further the technique used in
~\cite{lee}, we shall demonstrate that there exists an algorithm
to determine whether or not two given elements of $F_2$ are
boundedly translation equivalent, thus affirmatively answering
question F38c in the online version of [1] for the case of $F_2$.

Our last problem is concerned with the notion of fixed point
groups of automorphisms of free groups.

\begin{definition} A subgroup $H$ of $F_n$ is called the fixed
point group of an automorphism $\phi$ of $F_n$ if $H$ is precisely
the set of the elements of $F_n$ which are fixed by $\phi$.
\end{definition}

Due to Bestvina-Handel ~\cite{bestvina}, a subgroup of rank bigger
than $n$ cannot possibly be the fixed point group of an
automorphism of $F_n$. Recently Martino-Ventura ~\cite{ventura}
provided an explicit description for the fixed point groups of
automorphisms of $F_n$, generalizing the maximal rank case studied
by Collins-Turner ~\cite{turner}. However, this description is not
a complete characterization of all fixed point groups of
automorphisms of $F_n$. On the other hand, Maslakova
~\cite{maslakova} proved that, given an automorphism $\phi$ of
$F_n$, it is possible to effectively find a finite set of
generators of the fixed point group of $\phi$.

In Section 4, we shall present an algorithm to decide whether or
not a given finitely generated subgroup of $F_2$ is the fixed
point group of some automorphism of $F_2$, which settles problem
F1b in ~\cite{baumslag} in the affirmative for the case of $F_2$.

\section{Potential positivity in $F_2$}

Recall that $F_2$ denotes the free group with basis $\Sigma=\{a,b\}$,
and that $\s$ and $\t$ denote Whitehead automorphisms
\[
\s=(\{a\}, b), \quad \t=(\{b\}, a)
\]
of $F_2$ of type (W2). We also recall from ~\cite{lee} the definition
of trivial or nontrivial cancellation. For a cyclic word $w$ in
$F_2$ and a Whitehead automorphism, say $\s$, of $F_2$, a subword
of the form $ab^ra^{-1}$ ($r \neq 0$), if any, in $w$ is invariant
in passing from $w$ to $\s(w)$, although there occurs cancellation
in $\s(ab^ra^{-1})$ (note that $\s(ab^ra^{-1})=ab \cdot b^r \cdot
b^{-1}a^{-1}=ab^ra^{-1}$). Such cancellation is called {\it
trivial cancellation}. And cancellation which is not trivial
cancellation is called {\it proper cancellation}. For example, a
subword $ab^{-r}a$ ($r \ge 1$), if any, in $w$ is transformed to
$ab^{-r+1}ab$ by applying $\s$, and thus the cancellation occurring in
$\s(ab^{-r}a)$ is proper cancellation.

The following lemma from ~\cite{lee} will play a fundamental role
throughout the present paper.

\begin{lemma} \label{lem:2.4} (Lemma 2.4 in \cite{lee}) Let $u$ be a cyclic word in $F_2$, and let
$\psi$ be a chain of type (C1) (or (C2)). If $\psi$ contains at
least $\|u\|$ factors of $\s$ (or $\s^{-1}$), then there cannot
occur proper cancellation in passing from $\psi(u)$ to $\s\psi(u)$
(or $\psi(u)$ to $\s^{-1}\psi(u)$). Also if $\psi$ contains at
least $\|u\|$ factors of $\t$ (or $\t^{-1}$), then there cannot
occur proper cancellation in passing from $\psi(u)$ to $\t\psi(u)$
(or $\psi(u)$ to $\t^{-1}\psi(u)$).
\end{lemma}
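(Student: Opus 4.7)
By the natural symmetries ($a\leftrightarrow b$ interchanges $\s$ and $\t$; conjugating by the automorphism $b\mapsto b^{-1}$ exchanges $\s$ with $\s^{-1}$, $\t$ with $\t^{-1}$, and (C1) with (C2)), it suffices to prove that for every chain $\psi$ of type (C1) containing at least $\|u\|$ factors of $\s$, the word $\s\psi(u)$ has no proper cancellation. From the formulas $\s(a)=ab$, $\s(a^{-1})=b^{-1}a^{-1}$, $\s(b)=b$, $\s(b^{-1})=b^{-1}$, a direct inspection shows that the only adjacent pairs of a cyclically reduced word whose images under $\s$ cancel are $(a,b^{-1})$ and $(b,a^{-1})$, and such a pair yields trivial cancellation exactly when it sits inside an invariant subword $ab^ra^{-1}$ with $r\ne 0$. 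Call a cyclic word $w$ \emph{$\s$-stable} when every $(a,b^{-1})$ and $(b,a^{-1})$ pair of $w$ lies inside such a trivial block; the lemma is then equivalent to $\psi(u)$ being $\s$-stable.

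The plan is to attach to every cyclic word $w$ a nonnegative integer \emph{instability} $\Phi(w)$, defined for instance as the maximum, over all proper cancellation pairs of $w$, of the number of $b^{\pm 1}$'s between the pair and the next $a^{\pm 1}$ that would have completed an invariant block. Three properties are needed: \emph{(i)} $\Phi(w)=0$ iff $w$ is $\s$-stable, and $\Phi(w)\le\|w\|$ always; \emph{(ii)} $\Phi(\s(w))\le\max\{0,\Phi(w)-1\}$; \emph{(iii)} $\Phi(\t(w))\le\Phi(w)$. Property (ii) follows because the $b$ freshly inserted by $\s$ after each $a$ of a bad pair $(a,b^{-1})$ cancels the first $b^{-1}$ of that pair's $b^{-1}$-run, reducing its gap by exactly one, and symmetrically for $(b,a^{-1})$ pairs; one checks that no new bad pair of larger gap is created. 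Property (iii) is proved by a case analysis on each letter of $w$ and its $\t$-image, verifying that every new $(a,b^{-1})$ or $(b,a^{-1})$ pair of $\t(w)$ either sits inside an $\s$-trivial block of $\t(w)$ (typically arising from $\t$-reductions around the boundaries of the original $\s$-trivial blocks of $w$) or traces back to a bad pair of $w$ of equal or larger gap.

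With these three properties in hand, write $\psi=\t^{m_k}\s^{l_k}\cdots\t^{m_1}\s^{l_1}$ with $\sum l_i\ge\|u\|$; walking through the computation of $\psi(u)$ and applying (ii) at each $\s$-factor and (iii) at each $\t$-factor yields $\Phi(\psi(u))\le\max\{0,\Phi(u)-\sum l_i\}\le\max\{0,\|u\|-\|u\|\}=0$. Therefore $\psi(u)$ is $\s$-stable, so $\s\psi(u)$ has no proper cancellation, as required.

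The main obstacle is property (iii): since $\t$ does not preserve the $\s$-invariant blocks (for example $\t(aba^{-1})=ab$ and $\t(ab^ra^{-1})=(ab)^r$ for $r\ge 1$), the $\s$-trivial-block structure of $w$ does not transport directly to $\t(w)$, and one must verify case by case that every potentially new $(a,b^{-1})$ or $(b,a^{-1})$ pair of $\t(w)$ is either immediately shielded by a fresh $\s$-trivial block formed by $\t$-reductions at the boundaries of the original $\s$-trivial blocks of $w$, or else has gap no larger than that of a corresponding bad pair in $w$.
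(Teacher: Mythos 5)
This lemma is quoted from reference \cite{lee} (Lemma 2.4 there); the present paper contains no proof of it, so there is no in-paper argument to compare yours against, and your proposal has to stand on its own. Your reduction by symmetry, your identification of the cancelling pairs $(a,b^{-1})$ and $(b,a^{-1})$, and your characterization of ``$\s$-stable'' are all correct. However, the argument has a genuine gap: your property \emph{(iii)} is false for the potential you define. Take $w=[ab^{-1}ab^{-1}a]$. Both occurrences of $b^{-1}$ sit in bad configurations $ab^{-1}a$ of gap $1$, so $\Phi(w)=1$. But $\t(w)=[a\cdot a^{-1}b^{-1}\cdot a\cdot a^{-1}b^{-1}\cdot a]=[ab^{-2}]$, which contains the single bad configuration $ab^{-2}a$ of gap $2$, so $\Phi(\t(w))=2>\Phi(w)$. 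The mechanism is exactly the one you worried about: since $\t(b^{-1}ab^{-1})=a^{-1}b^{-2}$, the automorphism $\t$ merges isolated bad $b^{-1}$'s separated by single $a$'s into one longer run, and the maximal gap grows. Your fallback clause --- that every new bad pair ``traces back to a bad pair of $w$ of equal or larger gap'' --- is therefore also false as stated, and with $\Phi$ allowed to grow under $\t$ the telescoping inequality $\Phi(\psi(u))\le\max\{0,\Phi(u)-\sum l_i\}$ collapses, which is the whole engine of your proof.

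The monovariant idea can be saved, but with a different potential: count the total number of $b^{\pm 1}$-letters lying in bad runs (i.e.\ in subwords $ab^{-g}a$ or $a^{-1}b^{g}a^{-1}$), rather than the maximal gap. Since $\t$ preserves every $b^{\pm 1}$-letter and only inserts or cancels $a^{\pm1}$'s, one can check letter by letter that any $b^{\pm1}$ which is bad in $\t(w)$ was already bad in $w$ (in the example above the count stays at $2$), so this potential is non-increasing under $\t$; under $\s$ each bad run loses exactly one letter and no non-bad run becomes bad, so the potential strictly decreases while positive; and it is bounded by $n(u;b)\le\|u\|$ at the start. That yields the lemma, but it is a different invariant from the one you propose, and the case analysis for its monotonicity under $\t$ --- runs breaking apart, runs merging, and badness possibly being destroyed but never created --- still needs to be written out; in your current text that verification is only announced, not performed, and for your choice of $\Phi$ it cannot be performed because the claim is false.
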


The main result of this section is

\begin{theorem} \label{pro:3.1} Let $u$ be an element in $F_2$, and let $\Omega$ be the
set of all chains of type (C1) or (C2) of length less than or
equal to $2\|u\|+3$. Suppose that the cyclic word $[\phi(u)]$ is
positive for some automorphism $\phi$ of $F_2$. Then there exists
$\psi \in \Omega$ and a Whitehead automorphism $\b$ of $F_2$ of
type (W1) such that the cyclic word $[\b \psi(u)]$ is positive
(which is obviously equivalent to saying that there exists $c \in
F_2$ such that $\pi_c \b \psi(u)$ is positive, where $\pi_c$ is
the inner automorphism of $F_2$ induced by $c$).
\end{theorem}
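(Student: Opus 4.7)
My plan is to induct on $|\phi|$, using Lemma~\ref{lem:2.3} to reduce immediately to the case $\phi \equiv \b\phi'$ with $\phi'$ a chain of type (C1) or (C2). The base case $|\phi'|\le 2\|u\|+3$ is trivial with $\psi=\phi'$; the work lies in showing that $|\phi'|>2\|u\|+3$ forces the existence of a strictly shorter chain $\phi''$ of the same type and a Whitehead automorphism $\b'$ of type (W1) such that $[\b'\phi''(u)]$ is still positive, after which the induction hypothesis finishes the job.

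To shorten $\phi'$, I peel off its leftmost (last-applied) factor, writing $\phi'=\a\phi''$ with $\a\in\{\s^{\pm1},\t^{\pm1}\}$. The decisive ingredient is Lemma~\ref{lem:2.4}: when $\phi''$ contains at least $\|u\|$ factors of the same type as $\a$, no proper cancellation occurs in passing from $\phi''(u)$ to $\a\phi''(u)=\phi'(u)$. In that regime, the action of $\a$ on $\phi''(u)$ reduces to letter substitution (for $\a=\s$: $a\mapsto ab$, $a^{-1}\mapsto b^{-1}a^{-1}$, $b^{\pm1}\mapsto b^{\pm1}$), modified only by trivial cancellations inside $ab^ra^{-1}$-type subwords that leave those subwords invariant; in particular every letter of $\phi''(u)$ survives into $\phi'(u)$. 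Since $[\b\phi'(u)]$ is positive and $\b$ is of type (W1), $[\phi'(u)]$ lies in one of the four ``positive subsets'' $\{a,b\}$, $\{a^{-1},b^{-1}\}$, $\{a,b^{-1}\}$, $\{a^{-1},b\}$ of $\Sigma^{\pm1}$. A short case check on $\a$ and the chosen subset shows: either (i) every letter of $\phi''(u)$ already lies in that subset, so $[\b\phi''(u)]$ is itself positive, or (ii) $\phi''(u)$ is forced to be a power of $b^{\pm1}$ (when $\a\in\{\s,\s^{-1}\}$) or a power of $a^{\pm1}$ (when $\a\in\{\t,\t^{-1}\}$), in which case $\phi''(u)$ trivially lies in some positive subset and $[\b''\phi''(u)]$ is positive for a suitable $\b''$ of type (W1).

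The main obstacle is arranging for Lemma~\ref{lem:2.4} to apply to the chosen peel. Writing $\phi'$ of type (C1) as $\t^{m_k}\s^{l_k}\cdots\t^{m_1}\s^{l_1}$, the hypothesis $|\phi'|>2\|u\|+3$ yields $\sum(l_i+m_i)\ge 2\|u\|+4$, and by pigeonhole either $\sum l_i\ge\|u\|+2$ or $\sum m_i\ge\|u\|+2$; when this dominant type matches the type of the leftmost factor, peeling it leaves $\phi''$ with at least $\|u\|$ matching factors, and Lemma~\ref{lem:2.4} applies directly. The delicate situations are those in which the leftmost factor is of the ``wrong'' type---for instance, $\phi'$ begins with $\t^{m_k}$ while $\sum m_i<\|u\|+1$ but $\sum l_i$ is very large. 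I expect these to be handled by peeling a whole block of factors at once, or by an analogous inner peel combined with the $\phi\leftrightarrow\phi^{-1}$ symmetry exchanging types (C1) and (C2). Coordinating all cases under the single uniform bound $2\|u\|+3$ is the technical heart of the proof.
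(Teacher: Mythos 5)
Your overall strategy --- induction on $|\phi'|$ after invoking Lemma~\ref{lem:2.3}, peeling the leftmost (last-applied) factor, and using Lemma~\ref{lem:2.4} to argue that every letter of $\phi''(u)$ survives into $\phi'(u)$ so that membership in a ``positive subset'' of $\Sigma^{\pm1}$ descends to the shorter chain --- is exactly the paper's, and your easy case (the peeled factor is of the abundant type) coincides with the paper's Case~2. But the proposal does not prove the theorem: you explicitly defer the case where the leftmost factor has the scarce type, say $\phi'=\t\phi_1$ with $\s$ occurring at least $\|u\|+2$ times in $\phi_1$ but $\t$ occurring too few times for Lemma~\ref{lem:2.4} to apply to the outer $\t$, and you call this ``the technical heart'' without resolving it. Neither of your suggested fixes is carried out, and the $\phi\leftrightarrow\phi^{-1}$ symmetry in particular is a dead end: inverting a chain swaps (C1) and (C2) but reverses the order of the factors and gives no information about positivity of $[\phi^{-1}(u)]$, so it cannot turn a scarce leftmost $\t$ into an abundant one.

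The paper closes this case with two ideas that are absent from your sketch. Writing $\phi_1=\t^{m_t}\s^{\ell_t}\cdots\t^{m_1}\s^{\ell_1}$: if $m_t\ge1$, then since $\s^{\ell_t-1}\cdots\t^{m_1}\s^{\ell_1}$ still contains at least $\|u\|+1$ factors of $\s$, Lemma~\ref{lem:2.4} forces $[\s^{\ell_t}\cdots\t^{m_1}\s^{\ell_1}(u)]$ to contain no subword $a^{2}$ or $a^{-2}$, and it is this structural fact about the word --- not Lemma~\ref{lem:2.4} applied to $\t$ --- that guarantees the outer $\t$ produces no proper cancellation, so the leftmost $\t$ can be peeled after all. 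If $m_t=0$, so that $\phi_1=\s\phi_3$, the reduction is an \emph{inner} peel: one deletes that $\s$ and passes to $\t\phi_3$, which requires the nontrivial fact (imported from the proof of Theorem~1.2 of \cite{lee}) that proper cancellation in passing from $[\phi_3(u)]$ to $[\t\phi_3(u)]$ occurs in exactly the same places as in passing from $[\s\phi_3(u)]$ to $[\t\s\phi_3(u)]$; without this, positivity of $[\b\t\s\phi_3(u)]$ does not transfer to $[\b\t\phi_3(u)]$. You gesture at an inner peel but give no reason why the subsequent $\t$ interacts with $\phi_3(u)$ the same way it interacts with $\s\phi_3(u)$, and that is precisely the missing step.
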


Once this theorem is proved, an algorithm to decide whether or not
a given word in $F_2$ is potentially positive is naturally derived
as follows.

\begin{algorithm} Let $u$ be an element in $F_2$, and let $\Omega$ be
defined as in the statement of Theorem ~\ref{pro:3.1}. Clearly
$\Omega$ is a finite set. Check if there is $\psi \in \Omega$ and
a Whitehead automorphism $\b$ of $F_2$ of type (W1) for which the
cyclic word $[\b \psi(u)]$ is positive. If so, conclude that $u$
is potentially positive; otherwise conclude that $u$ is not
potentially positive.
\end{algorithm}

\medskip
\noindent {\it Proof of Theorem ~\ref{pro:3.1}.} By Lemma
~\ref{lem:2.3}, $\phi$ can be expressed as
$$\phi \equiv \b
\phi',$$ where $\b$ is a Whitehead automorphism of $F_2$ of type
(W1) and $\phi'$ is a chain of type (C1) or (C2). By the
hypothesis of the theorem,

\begin{equation} \label{equ:3.1}
\text{\rm $[\phi(u)]=[\b \phi' (u)]$ is positive.}
\end{equation}

If $|\phi'| \le 2\|u\|+3$, then there is nothing to prove. So
suppose that $|\phi'|> 2\|u\|+3$. We proceed with the proof by
induction on $|\phi'|$. Assume that $\phi'$ is a chain of type
(C1) which ends in $\t$ (the other cases are analogous). Write
$$\phi'=\t \phi_1,$$
where $\phi_1$ is a chain of type (C1). Since $|\phi_1| \ge
2\|u\|+3$, $\phi_1$ must contain at least $\|u\|+2$ factors of
$\s$ or $\t$. We consider two cases separately.

\medskip
\noindent {\bf Case 1.} {\it $\s$ occurs at least $\|u\|+2$ times
in $\phi_1$.}
\medskip

Write
$$\phi_1=\t^{m_t} \s^{\ell_t} \cdots \t^{m_1} \s^{\ell_1},$$
where all $m_i, \ell_i > 0$ but $\ell_1$ and $m_t$ may be zero.

\medskip
\noindent {\bf Case 1.1.} {\it $m_t \ge 1$.}
\medskip

In this case, put
$$\phi_1=\t^{m_t} \phi_2,$$
where $\phi_2$ is a chain of type (C1). By Lemma ~\ref{lem:2.4},
no proper cancellation can occur in passing from $[\s^{\ell_t-1}
\cdots \t^{m_1} \s^{\ell_1}(u)]$ to $[\phi_2(u)]$, and hence the
cyclic word $[\phi_2(u)]$ does not contain a subword of the form
$a^2$ or $a^{-2}$. From this fact and the assumption $m_t \ge 1$,
we can observe that no proper cancellation occurs in passing from
$[\phi_1(u)]$ to $[\t \phi_1(u)]=[\phi'(u)]$. This implies from
(\ref{equ:3.1}) that the cyclic word $[\b \phi_1(u)]$ is positive,
and thus induction completes the case.

\medskip
\noindent {\bf Case 1.2.} {\it $m_t=0$.}
\medskip

In this case, we may put
$$\phi_1=\s \phi_3,$$
where $\phi_3$ is a chain of type (C1). Again by Lemma
~\ref{lem:2.4}, no proper cancellation can occur in passing from
$[\phi_3(u)]$ to $[\s \phi_3(u)]=[\phi_1(u)]$. Additionally, the
proof of Theorem ~1.2 of ~\cite{lee} shows that proper
cancellation occurs in passing from $[\phi_3(u)]$ to $[\t
\phi_3(u)]$ exactly in the same place where proper cancellation
occurs in passing from $[\phi_1(u)]$ to $[\t
\phi_1(u)]=[\phi'(u)]$. Therefore, by (\ref{equ:3.1}), the cyclic
word $[\b \t \phi_3 (u)]$ is positive. Since $|\t
\phi_3|=|\phi'|-1$, we are done by induction.

\medskip
\noindent {\bf Case 2.} {\it $\t$ occurs at least $\|u\|+2$ times
in $\phi_1$.}
\medskip

In this case, also by Lemma ~\ref{lem:2.4}, no proper cancellation
can occur in passing from $[\phi_1(u)]$ to $[\t
\phi_1(u)]=[\phi'(u)]$. It then follows from (\ref{equ:3.1}) that
the cyclic word $[\b \phi_1(u)]$ is positive; hence the required
result follows by induction. \qed

\section{Bounded translation equivalence in $F_2$}

We begin this section by fixing notation. Following
~\cite{kapovich}, if $w$ is a cyclic word in $F_2$ and $x, y \in
\{a, b\}^{\pm 1}$, we use $n(w; x, y)$ to denote the total number
of occurrences of the subwords $xy$ and $y^{-1}x^{-1}$ in $w$.
Then clearly $n(w; x, y)=n(w; y^{-1}, x^{-1})$. Similarly we denote
by $n(w;x)$ the total number of occurrences of $x$ and $x^{-1}$ in
$w$. Again clearly $n(w;x)=n(w; x^{-1})$.

In this section, we shall prove that there exists an algorithm to
determine bounded translation equivalence in $F_2$. Let $u \in
F_2$. We first establish four preliminary lemmas which demonstrate
the difference between $\|\s \psi(u)\|$ or $\|\t \psi(u)\|$ and
$\|\psi(u)\|$, and which describe the situation when this
difference becomes zero, in the case where $\psi$ is a chain of
type (C1) that contains a number of factors of $\s$. We remark
that similar statements to the lemmas also hold if $\s$ and $\t$
are interchanged with each other, or (C1) is replaced by (C2) and
$\s$ and $\t$ are replaced by $\s^{-1}$ and $\t^{-1}$,
respectively.

\begin{lemma} \label{lem:4.1}
Let $u \in F_2$. Suppose that $\psi$ is a chain of type (C1)
which contains at least $\|u\|+2$ factors of $\s$. We may write
$\psi=\t^m \s \psi_1$, where $m \ge 0$ and $\psi_1$ is a chain of
type (C1).
Then
\[
\begin{aligned}
(i) \ \|\s \psi(u)\|-\|\psi(u)\|&=\|\s \t^m \psi_1(u)\|-\|\t^m
\psi_1(u)\|+m(\|\s \psi_1(u)\|-\|\psi_1(u)\|); \\
(ii) \|\t \psi(u)\|-\|\psi(u)\|&=\|\t \t^m \psi_1(u)\|-\|\t^m
\psi_1(u)\|+\|\s \psi_1(u)\|-\|\psi_1(u)\|.
\end{aligned}
\]
\end{lemma}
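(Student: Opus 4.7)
The plan is to prove identities (i) and (ii) simultaneously by induction on $m \geq 0$. Throughout, set $v = \psi_1(u)$, so that $\psi(u) = \t^m \s v$, and rewrite (i) and (ii) as statements about $v$, $\s v$, $\t^j v$, and $\t^j \s v$. The driving tool is Lemma~\ref{lem:2.4}: since $\psi$ contains at least $\|u\|+2$ factors of $\s$, each of the chains $\psi_1$, $\s\psi_1$, $\t^j\psi_1$, $\t^j\s\psi_1$ (for $0 \leq j \leq m$) contains at least $\|u\|$ factors of $\s$, so every application of $\s$ that appears in the two identities is free of proper cancellation. Consequently, for every such cyclic word $w$, we have $\|\s w\| - \|w\| = n(w;a) - 2t_\s(w)$, where $t_\s(w)$ counts the maximal subwords of $w$ of the form $ab^r a^{-1}$ with $r \neq 0$.

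For the base case $m = 0$, identity (i) reduces to $\|\s^2 v\| - \|\s v\| = \|\s v\| - \|v\|$. I would verify this by observing that $\s$ preserves both $n(\,\cdot\,;a)$ (since $\s$ fixes $a^{\pm 1}$ and inserts only $b^{\pm 1}$'s) and $t_\s$ (since $\s$ maps each $ab^r a^{-1}$ subword to itself). Identity (ii) at $m = 0$ reads $\|\t\s v\| - \|\s v\| = (\|\t v\| - \|v\|) + (\|\s v\| - \|v\|)$; here the key point is that $\s v$ contains exactly $\|\s v\| - \|v\|$ more $b^{\pm 1}$-letters than $v$ (as the $a$-count is $\s$-invariant), and each such extra $b^{\pm 1}$ contributes one extra $a^{\pm 1}$ when $\t$ is applied.

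For the inductive step, I would set $\psi' = \t^{m-1}\s\psi_1$, so that $\psi = \t\psi'$ and $\psi'$ still contains $\geq \|u\|+2$ factors of $\s$, and apply the inductive hypothesis to $\psi'$. Then $\|\s\psi(u)\| - \|\psi(u)\| = \|\s\t\psi'(u)\| - \|\t\psi'(u)\|$ and $\|\t\psi(u)\| - \|\psi(u)\| = \|\t^2\psi'(u)\| - \|\t\psi'(u)\|$ can be analyzed by applying the base-case identities to the cyclic word $\t\psi'(u)$, and combining with the inductive formulas should yield (i) and (ii) for $m$ after straightforward rearrangement.

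The main obstacle I anticipate is controlling proper cancellation when $\t$ is applied, since Lemma~\ref{lem:2.4} gives direct control only over $\s$-applications, and $\psi$ is $\s$-rich but potentially $\t$-poor. To overcome this, I would exploit the structural property of $\s$-images: a cyclic word of the form $\s\xi(u)$ has each $a$ followed by a $b$ and each $a^{-1}$ preceded by a $b^{-1}$, outside of the trivial patterns $ab^r a^{-1}$. This constraint should be inherited, with minor modifications, by $\t^j\s\psi_1(u)$, sharply restricting where $\t$-cancellations can occur and forcing the proper- and trivial-cancellation contributions to cancel out of the differences on the two sides of each identity.
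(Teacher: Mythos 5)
Your base cases are sound and coincide with the paper's key identities at level $i=0$: for (i) you use that $\s$ preserves $n(\cdot\,;a)$ and the count of $ab^{r}a^{-1}$-blocks (which, in the absence of proper cancellation, equals $n(\cdot\,;a,b^{-1})$), and for (ii) you use that the $b$-surplus of $\s\psi_1(u)$ over $\psi_1(u)$ equals $\|\s\psi_1(u)\|-\|\psi_1(u)\|$ while $n(\cdot\,;b,a^{-1})$ is unchanged. The inductive step, however, has a genuine gap. Writing $\psi=\t\psi'$ with $\psi'=\t^{m-1}\s\psi_1$, you propose to handle $\|\s\t\psi'(u)\|-\|\t\psi'(u)\|$ and $\|\t^2\psi'(u)\|-\|\t\psi'(u)\|$ by ``applying the base-case identities to the cyclic word $\t\psi'(u)$''; this cannot be done, because those identities are statements about words of the form $\s v$ with $v$ an $\s$-rich image, and $\t\psi'(u)$ is not of that form. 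Unwinding what the step actually needs: for (ii) you must show that the second differences of $j\mapsto\|\t^{j}\s\psi_1(u)\|$ and of $j\mapsto\|\t^{j}\psi_1(u)\|$ agree at $j=m-1$, which is exactly the first-difference identity
\[
\|\t^{i+1}\s\psi_1(u)\|-\|\t^{i}\s\psi_1(u)\|=\|\t^{i+1}\psi_1(u)\|-\|\t^{i}\psi_1(u)\|+\|\s\psi_1(u)\|-\|\psi_1(u)\|
\]
for $i\ge 1$, i.e.\ your base case (ii) pushed up to level $i$; equivalently, $n([\t^{i}\s\psi_1(u)];b,a^{-1})=n([\t^{i}\psi_1(u)];b,a^{-1})$ for all $i\ge 1$. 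That is precisely the nontrivial content you defer to your final paragraph (``this constraint should be inherited, with minor modifications''), so the induction does not bypass the hard point --- it repackages it, and the repackaged statement is never proved. A similar remark applies to (i), where you need $n([\t^{m}\s\psi_1(u)];a,b^{-1})=n([\t^{m}\psi_1(u)];a,b^{-1})$, again at level $m$ rather than $0$.

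For comparison, the paper does not induct on $m$. It imports the identity $n([\t^{i}\s\psi_1(u)];b,a^{-1})=n([\t^{i}\psi_1(u)];b,a^{-1})$ for all $i\ge 0$ from the proof of Theorem 1.2 of \cite{lee} (equation (\ref{equ:4.1})), combines it with the counting formulas $\|\s w\|-\|w\|=n(w;a)-2n(w;a,b^{-1})$ and $\|\t w\|-\|w\|=n(w;b)-2n(w;b,a^{-1})$ together with the invariance of the $b$-count under $\t$ and of the $a$-count under $\s$, and then telescopes over $i=0,\dots,m-1$. To repair your argument, either carry out in full the structural analysis of $[\t^{i}\s\psi_1(u)]$ versus $[\t^{i}\psi_1(u)]$ that you only sketch, or cite the adjacency-count identity directly and reduce to the paper's bookkeeping.
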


\begin{proof}
By the proof of Case ~1 of Theorem ~1.2 in ~\cite{lee}, we see
that

\begin{equation} \label{equ:4.1}
n([\t^i \s \psi_1(u)]; b, a^{-1})=n([\t^i \psi_1(u)]; b, a^{-1})
\end{equation}
for every $i \ge 0$, because $\psi_1$ contains at least $\|u\|+1$
factors of $\s$. In particular,
\begin{equation} \label{equ:4.1.5}
n([\psi(u)]; b, a^{-1})=n([\t^m \psi_1(u)]; b, a^{-1}),
\end{equation}
for $\psi=\t^m \s \psi_1$. Since only $a$ or $a^{-1}$ can possibly
cancel or newly occur in the process of applying $\t$, the number
of $b$ and $b^{-1}$ remains unchanged if $\t$ is applied. Thus
\begin{equation} \label{equ:4.2}
\begin{aligned}
n([\t^i \s \psi_1(u)]; b)&=n([\s \psi_1(u)]; b); \\
n([\t^i \psi_1(u)]; b)&=n([\psi_1(u)]; b)
\end{aligned}
\end{equation}
for every $i \ge 0$. Also since only $b$ or $b^{-1}$ can possibly
cancel or newly occur in the process of applying $\s$, we get
\[
n([\s \psi_1(u)]; b)=n([\psi_1(u)]; b)+\|\s
\psi_1(u)\|-\|\psi_1(u)\|.
\]
By (\ref{equ:4.2}), this equality can be rewritten as
\begin{equation} \label{equ:4.3}
n([\t^i \s \psi_1(u)]; b)=n([\t^i \psi_1(u)]; b)+\|\s
\psi_1(u)\|-\|\psi_1(u)\|
\end{equation}
for every $i \ge 0$. In particular,
\begin{equation} \label{equ:4.3.5}
n([\psi(u)]; b)=n([\t^m \psi_1(u)]; b)+\|\s
\psi_1(u)\|-\|\psi_1(u)\|,
\end{equation}
for $\psi=\t^m \s \psi_1$.

Equality (\ref{equ:4.3}) together with (\ref{equ:4.1}) yields that
\begin{multline} \label{equ:4.4}
n([\t^i \s \psi_1(u)]; b)-n([\t^i \s \psi_1(u)]; b, a^{-1}) \\
= n([\t^i \psi_1(u)]; b)-n([\t^i \psi_1(u)]; b, a^{-1})+\|\s
\psi_1(u)\|-\|\psi_1(u)\|
\end{multline}
for every $i \ge 0$. Here, since
\[
\begin{aligned}
\|\t^{i+1} \s \psi_1(u)\|-\|\t^i \s \psi_1(u)\|&=n([\t^i \s
\psi_1(u)];b)-2n([\t^i \s \psi_1(u)]; b, a^{-1}); \\
\|\t^{i+1} \psi_1(u)\|-\|\t^i \psi_1(u)\|&=n([\t^i \psi_1(u)];
b)-2n([\t^i \psi_1(u)]; b, a^{-1}),
\end{aligned}
\]
equality (\ref{equ:4.4}) can be rephrased as
\[
\|\t^{i+1} \s \psi_1(u)\|-\|\t^i \s \psi_1(u)\|=\|\t^{i+1}
\psi_1(u)\|-\|\t^i \psi_1(u)\|+\|\s \psi_1(u)\|-\|\psi_1(u)\|
\]
for every $i \ge 0$. By summing up both sides of these equalities
changing $i$ from $0$ to $m-1$, we have
\[
\|\t^m \s \psi_1(u)\|-\|\s \psi_1(u)\|=\|\t^m
\psi_1(u)\|-\|\psi_1(u)\|+m(\|\s \psi_1(u)\|-\|\psi_1(u)\|),
\]
so that
\begin{equation} \label{equ:4.7}
\|\t^m \s \psi_1(u)\|-\|\t^m \psi_1(u)\|=(m+1)(\|\s
\psi_1(u)\|-\|\psi_1(u)\|).
\end{equation}
Since $\psi=\t^m \s \psi_1$, equality (\ref{equ:4.7}) can be
rephrased as
\begin{equation} \label{equ:4.8}
\|\psi(u)\|-\|\t^m \psi_1(u)\|=(m+1)(\|\s
\psi_1(u)\|-\|\psi_1(u)\|).
\end{equation}

Clearly
\[
\begin{aligned}
n([\psi(u)]; a)&=\|\psi(u)\|-n([\psi(u)]; b); \\
n([\t^m \psi_1(u)]; a)&=\|\t^m \psi_1(u)\|-n([\t^m \psi_1(u)]; b).
\end{aligned}
\]
These equalities together with (\ref{equ:4.3.5}) and
(\ref{equ:4.8}) yield that
\begin{equation} \label{equ:4.10}
n([\psi(u)]; a)=n([\t^m \psi_1(u)]; a)+m(\|\s
\psi_1(u)\|-\|\psi_1(u)\|).
\end{equation}
It then follows from
\[
\begin{aligned}
\|\s \psi(u)\|-\|\psi(u)\|&=n([
\psi(u)]; a)-2n([\psi(u)]; a, b^{-1}); \\
\|\s \t^m \psi_1(u)\|-\|\t^m \psi_1(u)\|&=n([\t^m \psi_1(u)];
a)-2n([\t^m \psi_1(u)]; a, b^{-1})
\end{aligned}
\]
together with (\ref{equ:4.1.5}) and (\ref{equ:4.10}) that
\[
\|\s \psi(u)\|-\|\psi(u)\|=\|\s \t^m \psi_1(u)\|-\|\t^m
\psi_1(u)\|+m(\|\s \psi_1(u)\|-\|\psi_1(u)\|),
\]
thus proving the first assertion of the lemma.

On the other hand, we deduce from
\[
\begin{aligned}
\|\t \psi(u)\|-\|\psi(u)\|&=n([
\psi(u)]; b)-2n([\psi(u)]; b, a^{-1}); \\
\|\t \t^m \psi_1(u)\|-\|\t^m \psi_1(u)\|&=n([\t^m \psi_1(u)];
b)-2n([\t^m \psi_1(u)]; b, a^{-1})
\end{aligned}
\]
together with (\ref{equ:4.1.5}) and (\ref{equ:4.3.5}) that
\[
\|\t \psi(u)\|-\|\psi(u)\|=\|\t \t^m \psi_1(u)\|-\|\t^m
\psi_1(u)\|+\|\s \psi_1(u)\|-\|\psi_1(u)\|,
\]
which proves the
second assertion of the lemma.
\end{proof}

\begin{lemma}  \label{lem:4.3} Let $u \in F_2$. Suppose that $\psi$ is a chain
of type (C1) which contains at least $\|u\|$ factors of $\s$. Then
\[
\begin{aligned}
&(i) \ \|\s \psi(u)\|-\|\psi(u)\| \ge 0; \\
&(ii) \ \|\t \psi(u)\|-\|\psi(u)\| \ge 0.
\end{aligned}
\]
\end{lemma}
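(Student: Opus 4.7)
The plan is to derive both assertions from the structural consequence of Lemma~\ref{lem:2.4}. Since $\psi$ contains at least $\|u\|$ factors of $\s$, no proper cancellation occurs when $\s$ is applied to $\psi(u)$, which forces every occurrence of $ab^{-1}$ or $ba^{-1}$ in the cyclic word $w := [\psi(u)]$ to sit inside an $\s$-trivial block $ab^r a^{-1}$ with $r \ne 0$. As in the calculations preceding Lemma~\ref{lem:4.1}, the two length changes can be written as
\[
\|\s\psi(u)\| - \|\psi(u)\| = n(w;a) - 2\,n(w;a,b^{-1}), \qquad \|\t\psi(u)\| - \|\psi(u)\| = n(w;b) - 2\,n(w;a,b^{-1}),
\]
using $n(w;b,a^{-1}) = n(w;a,b^{-1})$. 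Each assertion then reduces to bounding $2\,n(w;a,b^{-1})$ from above by $n(w;a)$ in part (i) and by $n(w;b)$ in part (ii).

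Part (i) is quick. Every $\s$-trivial block $ab^r a^{-1}$ contributes exactly one to $n(w;a,b^{-1})$---an $ab^{-1}$ at its start when $r<0$, a $ba^{-1}$ at its end when $r>0$---and consumes two distinct $a$-type letters at its boundary $a$ and $a^{-1}$. Distinct blocks use disjoint boundary letters, and by the structural constraint no $ab^{-1}$ or $ba^{-1}$ occurs outside such a block, so $n(w;a) \ge 2\cdot(\text{number of blocks}) = 2\,n(w;a,b^{-1})$.

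Part (ii) is the substantive step. I would first dispose of the degenerate cases in which $w$ contains only one of the letters (both inequalities are trivial there), and then represent $w$ as an alternating cyclic product of maximal $a$-powers and maximal $b$-powers. The structural constraint from Lemma~\ref{lem:2.4} implies that the $b$-powers contributing to $n(w;a,b^{-1})$ are exactly those flanked by an $a$-power of positive exponent on the left and one of negative exponent on the right; call their count $P$, so that $n(w;a,b^{-1}) = P$. The key observation is a cyclic parity: in the circular word of signs of the $a$-powers, the number of $+\!\to\!-$ transitions equals the number of $-\!\to\!+$ transitions. Hence there are exactly $P$ maximal $b$-powers flanked in the reverse manner (with $a^{-}$ on the left and $a^{+}$ on the right), each of length at least one and each contributing nothing to $n(w;a,b^{-1})$ (the structural constraint rules out the $ab^{-1}$ and $ba^{-1}$ patterns that would otherwise arise there). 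Together with the at least one $b$-type letter contributed by each block itself, this gives $n(w;b) \ge 2P = 2\,n(w;a,b^{-1})$.

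The delicate step I expect to grind through is the cyclic counting argument in part (ii): carefully verifying which sign configurations are permitted by Lemma~\ref{lem:2.4}, pairing each $+\!\to\!-$ transition with a $-\!\to\!+$ transition, and treating the degenerate cases in which $w$ has only one maximal $a$-power. Once this bookkeeping is in place, the inequality in part (ii) follows by direct summation.
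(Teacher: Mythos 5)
Your proof is correct, and it starts from the same two ingredients as the paper's: the counting identities $\|\s\psi(u)\|-\|\psi(u)\|=n(w;a)-2n(w;a,b^{-1})$ and $\|\t\psi(u)\|-\|\psi(u)\|=n(w;b)-2n(w;b,a^{-1})$ for $w=[\psi(u)]$, and the consequence of Lemma~\ref{lem:2.4} that every subword $ab^{-1}$ or $ba^{-1}$ of $w$ lies inside a block $ab^{r}a^{-1}$ with $r\neq 0$. For part (i) the two arguments amount to the same bookkeeping: the paper writes $n(w;a)-2n(w;a,b^{-1})=n(w;a,a)+n(w;a,b)-n(w;a,b^{-1})$ and matches each block's contribution to $n(w;a,b^{-1})$ with one to $n(w;a,b)$, while you count the two boundary $a$-letters of each block. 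For part (ii) your mechanism is genuinely different. The paper argues locally, extending each block leftward to show it sits inside $ba^{s}b^{\pm r}a^{-1}$ or $a^{-1}b^{-t}a^{s}b^{\pm r}a^{-1}$ with $s,t>0$, which assigns to each block a distinct occurrence counted by $n(w;b,a)$ and yields $\|\t\psi(u)\|-\|\psi(u)\|\ge n(w;b,b)\ge 0$. You argue globally on the cyclic sequence of signs of the maximal $a$-syllables: the blocks correspond exactly to the ${+}\to{-}$ transitions, cyclic parity supplies an equal number of ${-}\to{+}$ transitions, and each of these $2P$ transitions is witnessed by a distinct nonempty $b$-syllable, giving $n(w;b)\ge 2\,n(w;b,a^{-1})$ outright. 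Both are sound, and your parity count is arguably cleaner as a standalone proof of (ii). What the paper's version buys is the sharper intermediate inequality $\|\t\psi(u)\|-\|\psi(u)\|\ge n(w;b,b)$ together with the equality analysis $n(w;b,a)=n(w;b,a^{-1})$ and $n(w;b,b)=0$, which is reused verbatim at (\ref{equ:4.14}) in the Claim inside the proof of Corollary~\ref{cor:4.5}; your argument yields an equivalent equality criterion (every $b$-syllable has length one and separates $a$-syllables of opposite sign), but it would need to be restated in that form before it could be substituted there.
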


\begin{proof} Clearly
\begin{equation} \label{equ:4.11}
\begin{split}
\|\s \psi(u)\|-\|\psi(u)\|&=n([\psi(u)];a)-2n([\psi(u); a,b^{-1})
\\
&=n([\psi(u)];a, a)+n([\psi(u)];a, b)-n([\psi(u)];a, b^{-1}).
\end{split}
\end{equation}
Since $\psi$ contains at least $\|u\|$ factors of $\s$, by Lemma
~\ref{lem:2.4}, there cannot occur proper cancellation in passing
from $[\psi(u)]$ to $[\s\psi(u)]$. Hence every subword of
$[\psi(u)]$ of the form $ab^{-1}$ or $ba^{-1}$ is necessarily part
of a subword of the form $ab^{-r}a^{-1}$ or $ab^ra^{-1}$ ($r >0$),
respectively. This implies that
\[
n([\psi(u)];a, b) \ge n([\psi(u)];a, b^{-1}),
\]
so that, from (\ref{equ:4.11}),
\[
\|\s \psi(u)\|-\|\psi(u)\| \ge n([\psi(u)];a, a) \ge 0,
\]
thus proving (i).

On the other hand, clearly
\begin{equation} \label{equ:4.12}
\begin{split}
\|\t \psi(u)\|-\|\psi(u)\|&=n([\psi(u)];b)-2n([\psi(u); b,a^{-1})
\\
&=n([\psi(u)];b, b)+n([\psi(u)];b, a)-n([\psi(u)];b, a^{-1}).
\end{split}
\end{equation}
As above, every subword of $[\psi(u)]$ of the form $ab^{-1}$ or
$ba^{-1}$ is necessarily part of a subword of the form
$ab^{-r}a^{-1}$ or $ab^ra^{-1}$ ($r >0$), respectively. Observe
that a subword of $[\psi(u)]$ of the form $ab^{\pm r}a^{-1}$ is
actually part of either a subword of the form $ba^sb^{\pm
r}a^{-1}$ or a subword of the form $a^{-1}b^{-t}a^sb^{\pm
r}a^{-1}$ ($s, t >0$). This implies that
\begin{equation} \label{equ:4.12.5}
n([\psi(u)];b, a) \ge n([\psi(u)];b, a^{-1}),
\end{equation}
so that, from (\ref{equ:4.12}),
\begin{equation} \label{equ:4.13}
\|\t \psi(u)\|-\|\psi(u)\| \ge n([\psi(u)];b, b) \ge 0,
\end{equation}
thus proving (ii).
\end{proof}

\begin{lemma}  \label{lem:4.3.3} Let $u \in F_2$. Suppose that $\psi$ is a chain
of type (C1) which contains at least $\|u\|+1$ factors of $\s$.
Then

(i) if $\|\s \psi(u)\|=\|\psi(u)\|$, then $\|\s^{i+1}
\psi(u)\|=\|\s^i \psi(u)\|$ for every $i \ge 0$;

(ii) if $\|\s^{j+1} \psi(u)\|=\|\s^j \psi(u)\|$ for some $j \ge
0$, then $\|\s \psi(u)\|=\|\psi(u)\|$.
\end{lemma}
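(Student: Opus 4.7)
The plan is to reduce both parts to a single recurrence. Set
\[
d_i \;:=\; \|\s^{i+1}\psi(u)\|-\|\s^i\psi(u)\| \qquad (i \ge 0),
\]
so that (i) amounts to ``$d_0 = 0 \Rightarrow d_i = 0$ for every $i \ge 0$'' and (ii) amounts to ``$d_j = 0$ for some $j \ge 0 \Rightarrow d_0 = 0$''. Both will follow immediately once I show that $d_{i+1} = d_i$ for every $i \ge 0$, i.e.\ that $d_i$ is independent of $i$.

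To establish this claim I would apply Lemma~\ref{lem:4.1} with $\s^{i+1}\psi$ playing the role of the lemma's $\psi$. Since the given $\psi$ already contains at least $\|u\|+1$ factors of $\s$, the chain $\s^{i+1}\psi$ is of type (C1) and contains at least $\|u\|+2$ factors of $\s$, so the hypothesis of Lemma~\ref{lem:4.1} is satisfied. Writing $\s^{i+1}\psi = \t^{0}\,\s\,(\s^i\psi)$ places us in the notation of that lemma with $m = 0$ and ``$\psi_1$'' equal to $\s^i\psi$. Part (i) of Lemma~\ref{lem:4.1} then collapses (the $m(\|\s\psi_1(u)\|-\|\psi_1(u)\|)$ summand vanishes) to
\[
\|\s^{i+2}\psi(u)\|-\|\s^{i+1}\psi(u)\| \;=\; \|\s\cdot\s^i\psi(u)\|-\|\s^i\psi(u)\| \;=\; \|\s^{i+1}\psi(u)\|-\|\s^i\psi(u)\|,
\]
which is precisely $d_{i+1} = d_i$.

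Iterating this identity yields $d_i = d_0$ for all $i \ge 0$, and both parts of the lemma drop out at once. I do not anticipate a real obstacle: the only point requiring a moment of verification is that $\s^{i+1}\psi$ fulfils the ``$\|u\|+2$ factors of $\s$'' hypothesis of Lemma~\ref{lem:4.1}, and this is automatic from the slightly stronger ``$\|u\|+1$'' lower bound already assumed for $\psi$ together with the presence of at least one prepended $\s$.
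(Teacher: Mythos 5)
Your proof is correct and follows essentially the same route as the paper: both rest on the single identity from Lemma~\ref{lem:4.1}~(i) with $m=0$ applied to $\s^{i+1}\psi = \s\,(\s^{i}\psi)$, which shows the consecutive differences $\|\s^{i+1}\psi(u)\|-\|\s^{i}\psi(u)\|$ are all equal. The paper merely packages this as two separate inductions (forward for (i), backward for (ii)) instead of your unified ``$d_i$ is constant'' phrasing, so the difference is purely presentational.
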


\begin{proof}
For (i), assume that $\|\s \psi(u)\|=\|\psi(u)\|$. We shall prove
$\|\s^{i+1} \psi(u)\|=\|\s^i \psi(u)\|$ by induction on $i \ge 0$.
The case where $i=0$ is clear. So let $i \ge 1$. By Lemma
~\ref{lem:4.1} ~(i) with $m=0$, we have
\[
\|\s^{i+1} \psi(u)\|-\|\s^i \psi(u)\|=\|\s^i \psi(u)\|-\|\s^{i-1}
\psi(u)\|.
\]
It follows from the induction hypothesis that
\[\|\s^{i+1} \psi(u)\|=\|\s^i \psi(u)\|,
\]
so proving (i).

For (ii), assume that $\|\s^{j+1} \psi(u)\|=\|\s^j \psi(u)\|$ for
some $j \ge 0$. We use induction on $j \ge 0$. If $j=0$, then
there is nothing to prove. So let $j \ge 1$. It follows from Lemma
~\ref{lem:4.1} ~(i) with $m=0$ that
\[
0=\|\s^{j+1} \psi(u)\|-\|\s^j \psi(u)\|=\|\s^j
\psi(u)\|-\|\s^{j-1} \psi(u)\|,
\]
so that
\[\|\s^j \psi(u)\|=\|\s^{j-1} \psi(u)\|.\]
Then by the induction hypothesis, we get the required result.
\end{proof}

\begin{lemma}  \label{lem:4.3.5} Let $u \in F_2$, and let $\psi=\s \psi_1$, where
$\psi_1$ is a chain of type (C1) which contains at least $\|u\|+1$
factors of $\s$. Suppose that $\|\t \psi(u)\|=\|\psi(u)\|$. Then
$\|\s^{i+1} \psi_1(u)\|=\|\s^i \psi_1(u)\|$ for every $i \ge 0$.
\end{lemma}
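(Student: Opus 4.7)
The plan is to chain together Lemmas \ref{lem:4.1}, \ref{lem:4.3}, and \ref{lem:4.3.3}, using $\psi = \s\psi_1$ as the decomposition $\t^m\s\psi_1$ with the trivial choice $m=0$. First I would apply Lemma~\ref{lem:4.1}~(ii) with $m=0$ to $\psi$. Since $\psi_1$ itself is a chain of type (C1) containing at least $\|u\|+1$ factors of $\s$, the hypothesis of Lemma~\ref{lem:4.1} is certainly satisfied by $\psi$, and the formula reads
\[
\|\t\psi(u)\|-\|\psi(u)\|=\bigl(\|\t\psi_1(u)\|-\|\psi_1(u)\|\bigr)+\bigl(\|\s\psi_1(u)\|-\|\psi_1(u)\|\bigr).
\]

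Next I would invoke Lemma~\ref{lem:4.3}: because $\psi_1$ contains at least $\|u\|+1\ge\|u\|$ factors of $\s$, both of the two differences on the right-hand side are nonnegative. Combined with the standing hypothesis $\|\t\psi(u)\|=\|\psi(u)\|$, this forces each summand to vanish individually. In particular, $\|\s\psi_1(u)\|=\|\psi_1(u)\|$.

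Finally, I would feed this equality into Lemma~\ref{lem:4.3.3}~(i), whose hypothesis requires $\psi_1$ to contain at least $\|u\|+1$ factors of $\s$ -- exactly what we are given. The conclusion is $\|\s^{i+1}\psi_1(u)\|=\|\s^i\psi_1(u)\|$ for every $i\ge 0$, which is the desired statement.

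There is no genuine obstacle here; the argument is a short bookkeeping exercise using the previous three lemmas. The only point that needs a moment's attention is to verify the numerical bookkeeping of the hypotheses: $\psi_1$ having at least $\|u\|+1$ factors of $\s$ is exactly enough to use \emph{both} Lemma~\ref{lem:4.3} (which needs $\ge\|u\|$) and Lemma~\ref{lem:4.3.3} (which needs $\ge\|u\|+1$), and the presence of the extra leading $\s$ in $\psi=\s\psi_1$ is what guarantees that Lemma~\ref{lem:4.1} applies to $\psi$ with $m=0$.
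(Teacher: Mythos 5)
Your proof is correct and follows essentially the same route as the paper: apply Lemma~\ref{lem:4.1}~(ii) with $m=0$, use Lemma~\ref{lem:4.3} to force both nonnegative summands to vanish, and then feed $\|\s\psi_1(u)\|=\|\psi_1(u)\|$ into Lemma~\ref{lem:4.3.3}~(i). The bookkeeping of hypotheses you check at the end matches the paper's usage exactly.
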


\begin{proof}
By Lemma ~\ref{lem:4.1} ~(ii) with $m=0$, we have
\[
0=\|\t \psi(u)\|-\|\psi(u)\|=\|\t \psi_1(u)\|-\|\psi_1(u)\|+\|\s
\psi_1(u)\|-\|\psi_1(u)\|.
\]
Here, by Lemma ~\ref{lem:4.3} ~(ii), $\|\t
\psi_1(u)\|-\|\psi_1(u)\| \ge 0$. Also by Lemma ~\ref{lem:4.3}
~(i), $\|\s \psi_1(u)\|-\|\psi_1(u)\| \ge 0$. Hence we must have
\[\|\t \psi_1(u)\|=\|\psi_1(u)\| \ \text{\rm and} \ \|\s
\psi_1(u)\|=\|\psi_1(u)\|.\] The second equality $\|\s
\psi_1(u)\|=\|\psi_1(u)\|$ yields from Lemma ~\ref{lem:4.3.3} ~(i)
that
\[ \|\s^{i+1} \psi_1(u)\|=\|\s^i \psi_1(u)\| \]
for every $i \ge 0$, thus proving the assertion.
\end{proof}

For the proof of the main result of the present section, we need
the following two technical corollaries of Lemmas
~\ref{lem:4.1}--\ref{lem:4.3.5}. We remark that similar statements
to the corollaries also hold if $\s$ and $\t$ are interchanged
with each other, or (C1) is replaced by (C2) and $\s$ and $\t$ are
replaced by $\s^{-1}$ and $\t^{-1}$, respectively.

\begin{corollary} \label{cor:4.4.5} Let $u, v \in F_2$ with $\|u\| \ge \|v\|$,
and let $\psi$ be a chain of type (C1) with $|\psi| \ge 2\|u\|+3$.
Put $k=\|u\|+1$. Suppose that $u$ and $v$ have the property that
\[
\begin{aligned}
&\|\s^{k+1} \psi'(u)\|=\|\s^k \psi'(u)\| \ \text{if and only
if} \ \|\s^{k+1} \psi'(v)\|=\|\s^k \psi'(v)\|; \\
&\|\t^{k+1} \psi'(u)\|=\|\t^k \psi'(u)\| \ \text{if and only if} \
\|\t^{k+1} \psi'(v)\|=\|\t^k \psi'(v)\|,
\end{aligned}
\]
for every chain $\psi'$ of type (C1) with $|\psi'| < |\psi|$. Then
we have
\[
\begin{aligned}
&(i) \ \|\s^{k+1} \psi(u)\|=\|\s^k \psi(u)\| \ \text{if and only
if} \ \|\s^{k+1} \psi(v)\|=\|\s^k \psi(v)\|; \\
&(ii) \ \|\t^{k+1} \psi(u)\|=\|\t^k \psi(u)\| \ \text{if and only
if} \ \|\t^{k+1} \psi(v)\|=\|\t^k \psi(v)\|.
\end{aligned}
\]
\end{corollary}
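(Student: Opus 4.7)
The plan is to leverage the corollary's hypothesis on all shorter chains $\psi'$ together with Lemmas~\ref{lem:4.1}--\ref{lem:4.3.5}. Since $|\psi|\ge 2\|u\|+3$, the pigeonhole principle forces $\psi$ to contain at least $\|u\|+2$ factors of $\s$ or at least $\|u\|+2$ factors of $\t$; by the symmetry remark preceding the corollary, I may assume the former throughout.

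For part~(i), Lemma~\ref{lem:4.3.3}(ii), applicable because $\psi$ has $\ge \|u\|+1$ factors of $\s$, reduces the question to whether $\|\s\psi(u)\|=\|\psi(u)\|$. Decomposing $\psi=\t^m\s\psi_1$ as in Lemma~\ref{lem:4.1} and invoking Lemma~\ref{lem:4.1}(i) gives
\[
\|\s\psi(u)\|-\|\psi(u)\| = \bigl(\|\s\t^m\psi_1(u)\|-\|\t^m\psi_1(u)\|\bigr) + m\bigl(\|\s\psi_1(u)\|-\|\psi_1(u)\|\bigr),
\]
with both summands non-negative by Lemma~\ref{lem:4.3}(i), since $\psi_1$ and $\t^m\psi_1$ each retain $\ge\|u\|+1$ factors of $\s$. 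Hence the left-hand side vanishes iff each summand does. Each such vanishing is in turn equivalent, through Lemma~\ref{lem:4.3.3}, to an equality $\|\s^{k+1}\psi'(u)\|=\|\s^k\psi'(u)\|$ for a shorter chain $\psi'\in\{\t^m\psi_1,\psi_1\}$. The corollary's hypothesis swaps $u$ for $v$ in each such equality, and Lemmas~\ref{lem:4.1}(i) and~\ref{lem:4.3.3} applied to $v$ (valid because $\|v\|\le\|u\|$ transfers the factor-count requirements) reassemble the result into $\|\s^{k+1}\psi(v)\|=\|\s^k\psi(v)\|$.

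For part~(ii), $\psi$ may fail to contain $\ge\|u\|+1$ factors of $\t$, so Lemma~\ref{lem:4.3.3} is not immediately available on the $\t$ side. The key move is to apply Lemma~\ref{lem:4.1}(ii) not to $\psi$ itself but to the augmented chain $\t^k\psi = \t^{k+m}\s\psi_1$, which has the same $\s$-count as $\psi$ and therefore still falls under Lemma~\ref{lem:4.1}. This yields
\[
\|\t^{k+1}\psi(u)\|-\|\t^k\psi(u)\| = \bigl(\|\t^{k+m+1}\psi_1(u)\|-\|\t^{k+m}\psi_1(u)\|\bigr) + \bigl(\|\s\psi_1(u)\|-\|\psi_1(u)\|\bigr),
\]
again with both summands non-negative by Lemma~\ref{lem:4.3}. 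The vanishing of the first summand is exactly the $\t$-clause of the hypothesis applied to the shorter chain $\psi'=\t^m\psi_1$ (observe $\t^k\psi'=\t^{k+m}\psi_1$), while the vanishing of the second is equivalent, via Lemma~\ref{lem:4.3.3}, to the $\s$-clause for $\psi'=\psi_1$. Running the decomposition in reverse for $v$ produces the desired equivalence.

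The main obstacle is precisely part~(ii) in the subcase where $\s$, but not $\t$, is abundant in $\psi$: the naive route through the $\t$-analogue of Lemma~\ref{lem:4.3.3} is blocked. The augmentation $\psi\mapsto\t^k\psi$ inflates the $\t$-count sufficiently to apply Lemma~\ref{lem:4.1} cleanly, while peeling off the leftmost $\s$ preserves the applicability of the $\s$-based lemmas to the residual chain $\psi_1$. Everything else is bookkeeping to ensure the factor-count hypotheses of Lemmas~\ref{lem:4.3} and~\ref{lem:4.3.3} transfer between $u$ and $v$, which works because $k$ is set using $\|u\|\ge\|v\|$.
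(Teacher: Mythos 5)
Your proposal is correct and takes essentially the same route as the paper's proof: decompose $\psi=\t^m\s\psi_1$, apply Lemma~\ref{lem:4.1} (to $\psi$ for part~(i) and, implicitly in the paper as explicitly in your write-up, to the augmented chain $\t^k\psi$ for part~(ii)), use the non-negativity from Lemma~\ref{lem:4.3} to force each summand to vanish separately, convert each vanishing into an instance of the hypothesis at a strictly shorter chain (via Lemma~\ref{lem:4.3.3} for the $\s$-summand, directly for the $\t$-summand), and reassemble for $v$. The only presentational difference is that you dispatch the $\t$-abundant case by the $\s\leftrightarrow\t$ symmetry, where the paper works through it explicitly as its Case~2.
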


\begin{proof} Suppose that $\psi$ ends in $\t$ (the case where $\psi$ ends in $\s$
is analogous). Since $|\psi| \ge 2\|u\|+3$, either $\s$ or $\t$
occurs at least $\|u\|+2$ times in $\psi$. We consider two cases
separately.

\medskip
{\bf Case 1.} $\s$ occurs at least $\|u\|+2$ times in $\psi$.
\medskip

First we shall prove (i). Suppose that $\|\s^{k+1}
\psi(u)\|=\|\s^k \psi(u)\|$. By Lemma ~\ref{lem:4.3.3} ~(ii), we
have
\[
\|\s \psi(u)\|=\|\psi(u)\|.
\]
Write
\[\psi=\t^{\ell} \s \psi_1,\]
where $\ell \ge 1$ and $\psi_1$ is a chain of type (C1). Clearly
$\psi_1$ contains at least $\|u\|+1$ factors of $\s$. By Lemma
~\ref{lem:4.1} ~(i), we have
\[
0=\|\s \psi(u)\|-\|\psi(u)\|=\|\s \t^{\ell}
\psi_1(u)\|-\|\t^{\ell} \psi_1(u)\|+{\ell}(\|\s
\psi_1(u)\|-\|\psi_1(u)\|).\] Here, since $\|\s \t^{\ell}
\psi_1(u)\|-\|\t^{\ell} \psi_1(u)\| \ge 0$ and $\|\s
\psi_1(u)\|-\|\psi_1(u)\| \ge 0$ by Lemma ~\ref{lem:4.3} ~(i), the
only possibility is that
\[\|\s \t^{\ell} \psi_1(u)\|=\|\t^{\ell} \psi_1(u)\| \ \text{\rm and} \ \|\s
\psi_1(u)\|=\|\psi_1(u)\|.
\]
These equalities together with Lemma ~\ref{lem:4.3.3} ~(i) yield
that
\[
\|\s^{k+1} \t^{\ell} \psi_1(u)\|=\|\s^k \t^{\ell} \psi_1(u)\| \
\text{\rm and} \ \|\s^{k+1} \psi_1(u)\|=\|\s^k \psi_1(u)\|. \]
Since $|\t^{\ell} \psi_1|<|\psi|$ and $|\psi_1|<|\psi|$, by the
hypothesis of the corollary, we get
\[
\|\s^{k+1} \t^{\ell} \psi_1(v)\|=\|\s^k \t^{\ell} \psi_1(v)\| \
\text{and} \  \|\s^{k+1} \psi_1(v)\|=\|\s^k \psi_1(v)\|.
\]
Again by Lemma ~\ref{lem:4.3.3} ~(ii), we have
\[
\|\s \t^{\ell} \psi_1(v)\|=\|\t^{\ell} \psi_1(v)\| \ \text{and} \
\|\s \psi_1(v)\|=\|\psi_1(v)\|.
\]
Therefore, by Lemma ~\ref{lem:4.1} ~(i),
\[ \begin{aligned}
\|\s \psi(v)\|-\|\psi(v)\|&=\|\s \t^{\ell} \psi_1(v)\|-\|\t^{\ell}
\psi_1(v)\|+{\ell}(\|\s \psi_1(v)\|-\|\psi_1(v)\|)\\
&=0,
\end{aligned}
\]
namely, $\|\s \psi(v)\|=\|\psi(v)\|$. Then the desired equality
$\|\s^{k+1} \psi(v)\|=\|\s^k \psi(v)\|$ follows from Lemma
~\ref{lem:4.3.3} ~(i).

Conversely, if $\|\s^{k+1} \psi(v)\|=\|\s^k \psi(v)\|$, we can
deduce, in the same way as above, that $\|\s^{k+1}
\psi(u)\|=\|\s^k \psi(u)\|$.

Next we shall prove (ii). Assume that $\|\t^{k+1} \psi(u)\|=\|\t^k
\psi(u)\|$. Apply Lemma ~\ref{lem:4.1} ~(ii) to get
\begin{equation} \label{equ:4.13.3}
0=\|\t^{k+1} \psi(u)\|-\|\t^k \psi(u)\|=\|\t^{k+1} \t^{\ell}
\psi_1(u)\|-\|\t^k \t^{\ell} \psi_1(u)\|+\|\s
\psi_1(u)\|-\|\psi_1(u)\|.
\end{equation}
Here, since $\|\t^{k+1} \t^{\ell} \psi_1(u)\|-\|\t^k \t^{\ell}
\psi_1(u)\| \ge 0$ by Lemma ~\ref{lem:4.3} ~(ii), and since $\|\s
\psi_1(u)\|-\|\psi_1(u)\| \ge 0$ by Lemma ~\ref{lem:4.3} ~(i), we
must have
\begin{equation} \label{equ:4.13.5}
\|\t^{k+1} \t^{\ell} \psi_1(u)\|=\|\t^k \t^{\ell} \psi_1(u)\| \
\text{\rm and} \ \|\s \psi_1(u)\|=\|\psi_1(u)\|.
\end{equation}
Since $|\t^{\ell} \psi_1|<|\psi|$, by the hypothesis of the
corollary, the first equality of (\ref{equ:4.13.5}) implies that
\[\|\t^{k+1} \t^{\ell} \psi_1(v)\|=\|\t^k \t^{\ell} \psi_1(v)\|.\]
Also, from the second equality of (\ref{equ:4.13.5}), arguing as
above, we deduce that
\[
\|\s \psi_1(v)\|=\|\psi_1(v)\|.
\]
Therefore, by Lemma ~\ref{lem:4.1} ~(ii),
\[ \begin{aligned}
\|\t^{k+1} \psi(v)\|-\|\t^k \psi(v)\|&=\|\t^{k+1} \t^{\ell}
\psi_1(v)\|-\|\t^k \t^{\ell} \psi_1(v)\|+\|\s
\psi_1(v)\|-\|\psi_1(v)\| \\
&=0,
\end{aligned}
\]
that is, $\|\t^{k+1} \psi(v)\|=\|\t^k \psi(v)\|$, as required.

It is clear that the converse is also true.

\medskip
{\bf Case 2.} $\t$ occurs at least $\|u\|+2$ times in $\psi$.
\medskip

Since $\psi$ is assumed to end in $\t$, we may write
$$\psi=\t \psi_2,$$
where $\psi_2$ is a chain of type (C1) that contains at least
$\|u\|+1$ factors of $\t$.

First we shall prove (i). Suppose that $\|\s^{k+1}
\psi(u)\|=\|\s^k \psi(u)\|$. By Lemma ~\ref{lem:4.1} ~(ii) with
$\s, \t$ interchanged, we have
\[
0=\|\s^{k+1} \psi(u)\|-\|\s^k \psi(u)\|=\|\s^{k+1}
\psi_2(u)\|-\|\s^k \psi_2(u)\|+\|\t \psi_2(u)\|-\|\psi_2(u)\|.
\]
This is a similar situation to (\ref{equ:4.13.3}) with $\s, \t$
interchanged. So arguing as in Case ~1, we get the desired
equality $\|\s^{k+1} \psi(v)\|=\|\s^k \psi(v)\|$. Clearly the
converse also holds.

Next we shall prove (ii). Suppose that $\|\t^{k+1}
\psi(u)\|=\|\t^k \psi(u)\|$. By Lemma ~\ref{lem:4.1} ~(i) with
$\s, \t$ interchanged and $m=0$, we have
\[
0=\|\t^{k+1} \psi(u)\|-\|\t^k \psi(u)\|=\|\t^k \psi(u)\|-\|\t^{k-1}
\psi(u)\|.
\]
So
\[\|\t^k \psi(u)\|=\|\t^{k-1}
\psi(u)\|.\] This equality can be rephrased as
\[\|\t^{k+1} \psi_2(u)\|=\|\t^k
\psi_2(u)\|,\] because $\psi=\t \psi_2$. Since $|\psi_2|<|\psi|$,
by the hypothesis of the corollary,
\[\|\t^{k+1} \psi_2(v)\|=\|\t^k
\psi_2(v)\|,\] that is,
\[\|\t^k \psi(v)\|=\|\t^{k-1}
\psi(v)\|.\] Thus, by Lemma ~\ref{lem:4.1} ~(i) with $\s, \t$
interchanged and $m=0$, we obtain
\[\|\t^{k+1} \psi(v)\|-\|\t^k
\psi(v)\|=\|\t^k \psi(v)\|-\|\t^{k-1} \psi(v)\|=0,
\]
namely, $\|\t^{k+1} \psi(v)\|=\|\t^k \psi(v)\|$, as required.
Obviously the converse is also true.
\end{proof}

\begin{corollary} \label{cor:4.5} Let $u, v \in F_2$ with $\|u\| \ge
\|v\|$, and let $\psi$ be a chain of type (C1). Put $k=\|u\|+1$.
Suppose that $u$ and $v$ have the property that
\[
\begin{aligned}
\|\s^{k+1} \psi'(u)\|=\|\s^k \psi'(u)\| \ \text{if and only if} \
\|\s^{k+1} \psi'(v)\|=\|\s^k \psi'(v)\|; \\
\|\t^{k+1} \psi'(u)\|=\|\t^k \psi'(u)\| \ \text{if and only if} \
\|\t^{k+1} \psi'(v)\|=\|\t^k \psi'(v)\|,
\end{aligned}
\]
for every chain $\psi'$ of type (C1) with $|\psi'| \le |\psi|$.
Then we have

(i) if $\psi$ contains at least $\|u\|+1$ factors of $\s$, then
\[
\|\s \psi(u)\|=\|\psi(u)\| \ \text{if and only if} \ \|\s
\psi(v)\|=\|\psi(v)\|;
\]

(ii) if $\|\t \psi(u)\|=\|\psi(u)\|$ or $\|\t
\psi(v)\|=\|\psi(v)\|$, and $\psi=\s \psi_1$, where $\psi_1$ is a
chain of type (C1) which contains at least $\|u\|+1$ factors of
$\s$, then
\[
\|\s \psi_1(u)\|=\|\psi_1(u)\| \ \text{and} \ \|\s
\psi_1(v)\|=\|\psi_1(v)\|;
\]

(iii) if $\psi$ contains at least $\|u\|+2$ factors of $\s$ and
ends in $\t$, then
\[
\|\t \psi(u)\|=\|\psi(u)\| \ \text{if and only if} \ \|\t
\psi(v)\|=\|\psi(v)\|.
\]
\end{corollary}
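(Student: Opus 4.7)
The plan is to prove parts (i), (ii), (iii) in that order. The first two follow immediately from Lemmas~\ref{lem:4.3.3} and~\ref{lem:4.3.5} combined with the hypothesis applied to a suitable $\psi'$, while (iii) is the main obstacle.

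For (i), since $\psi$ contains at least $\|u\|+1\ge\|v\|+1$ factors of $\s$, Lemma~\ref{lem:4.3.3} gives, for each $w\in\{u,v\}$, the equivalence $\|\s\psi(w)\|=\|\psi(w)\|$ iff $\|\s^{k+1}\psi(w)\|=\|\s^k\psi(w)\|$, and the hypothesis applied to $\psi'=\psi$ (allowed since $|\psi'|\le|\psi|$) chains these. For (ii), if $\|\t\psi(u)\|=\|\psi(u)\|$ then Lemma~\ref{lem:4.3.5} yields $\|\s^{i+1}\psi_1(u)\|=\|\s^i\psi_1(u)\|$ for every $i\ge 0$: the case $i=0$ gives $\|\s\psi_1(u)\|=\|\psi_1(u)\|$, and the case $i=k$, combined with the hypothesis applied to $\psi'=\psi_1$ (note $|\psi_1|<|\psi|$) and with Lemma~\ref{lem:4.3.3}(ii), gives $\|\s\psi_1(v)\|=\|\psi_1(v)\|$. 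The case starting from $\|\t\psi(v)\|=\|\psi(v)\|$ is symmetric, the assumption $\|u\|\ge\|v\|$ ensuring that Lemma~\ref{lem:4.3.5} still applies with $v$ in place of $u$.

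For (iii), write $\psi=\t^m\s\psi_1$ with $m\ge 1$ (since $\psi$ ends in $\t$) and $\psi_1$ a chain of type (C1) having at least $\|u\|+1$ factors of $\s$. By Lemma~\ref{lem:4.1}(ii),
\[
\|\t\psi(u)\|-\|\psi(u)\|=\bigl(\|\t^{m+1}\psi_1(u)\|-\|\t^m\psi_1(u)\|\bigr)+\bigl(\|\s\psi_1(u)\|-\|\psi_1(u)\|\bigr),
\]
and both bracketed summands are non-negative by Lemma~\ref{lem:4.3}. Hence $\|\t\psi(u)\|=\|\psi(u)\|$ is equivalent to the conjunction of $\|\s\psi_1(u)\|=\|\psi_1(u)\|$ and $\|\t^{m+1}\psi_1(u)\|=\|\t^m\psi_1(u)\|$. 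Part (i) applied to $\psi_1$ transfers the $\s$-summand to $v$. For the $\t$-summand I would apply the $\s\leftrightarrow\t$ swapped form of Lemma~\ref{lem:4.3.3} to the chain $\t^m\psi_1$, converting $\|\t^{m+1}\psi_1(u)\|=\|\t^m\psi_1(u)\|$ into $\|\t^{k+m+1}\psi_1(u)\|=\|\t^{k+m}\psi_1(u)\|$; the hypothesis with $\psi'=\t^m\psi_1$ (again $|\psi'|\le|\psi|$) passes this to $v$, and running the swapped lemma backwards for $v$ recovers $\|\t^{m+1}\psi_1(v)\|=\|\t^m\psi_1(v)\|$.

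The hard part is that this strategy requires $\t^m\psi_1$ to contain at least $\|u\|+1$ factors of $\t$, which is not guaranteed by the hypotheses. In the residual low-$\t$-count case I would instead rely on the companion identity
\[
\|\t^{k+1}\psi(u)\|-\|\t^k\psi(u)\|=\bigl(\|\t^{k+m+1}\psi_1(u)\|-\|\t^{k+m}\psi_1(u)\|\bigr)+\bigl(\|\s\psi_1(u)\|-\|\psi_1(u)\|\bigr)
\]
obtained by applying Lemma~\ref{lem:4.1}(ii) to $\t^k\psi$, use the hypothesis with $\psi'=\psi$ directly to transfer $\|\t^{k+1}\psi(u)\|=\|\t^k\psi(u)\|$ between $u$ and $v$, and exploit the structural constraints on $[\psi_1(u)]$ imposed by $\|\s\psi_1(u)\|=\|\psi_1(u)\|$ — namely $n([\psi_1(u)];a,a)=0$ and $n([\psi_1(u)];a,b)=n([\psi_1(u)];a,b^{-1})$, which fall out of the proof of Lemma~\ref{lem:4.3} — to show that under these constraints the sequence $\|\t^j\psi_1(u)\|$ stays constant for $j\ge m$ once it is constant at $j=m$. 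This structural stabilisation claim is the principal new ingredient beyond the bookkeeping already used in the proof of Corollary~\ref{cor:4.4.5}, and it is where I expect the main effort to lie.
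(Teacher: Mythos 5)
Your treatment of (i) and (ii) is correct and essentially coincides with the paper's: (i) is Lemma~\ref{lem:4.3.3} run in both directions through the hypothesis at $\psi'=\psi$, and (ii) is Lemma~\ref{lem:4.3.5} followed by (i) applied to $\psi_1$. The problem is (iii), and you have located it yourself: after the decomposition $\psi=\t^{\ell}\s\psi_2$ and Lemma~\ref{lem:4.1}~(ii), everything reduces to showing that $\|\t^{j+1}\psi_2(u)\|=\|\t^{j}\psi_2(u)\|$ for all $j$, and then to transporting this to $v$ --- and you leave this ``structural stabilisation claim'' as the place where you ``expect the main effort to lie'' without proving it. That claim is exactly the Claim inside the paper's proof, and it does not follow from the constraints you propose to use. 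The conditions $n([\psi_2(u)];a,a)=0$ and $n([\psi_2(u)];a,b)=n([\psi_2(u)];a,b^{-1})$ extracted from $\|\s\psi_2(u)\|=\|\psi_2(u)\|$ are insufficient on their own: the cyclic word $[ab^{2}a^{-1}b^{-2}]$ satisfies both, yet $\|\t w\|-\|w\|=n(w;b)-2n(w;b,a^{-1})=4-2=2>0$. What the paper actually combines is (a) the $b$-adjacency constraints $n([\psi(u)];b,b)=0$ and $n([\psi(u)];b,a)=n([\psi(u)];b,a^{-1})$ coming from $\|\t\psi(u)\|=\|\psi(u)\|$ via the proof of Lemma~\ref{lem:4.3}; (b) the fact, from Lemma~\ref{lem:2.4}, that $a^{\pm2}$ cannot occur in $[\s\psi_2(u)]$, so no proper cancellation occurs in applying $\t$ to $[\psi(u)]$; and (c) the resulting rigidity $[\psi(u)]=[a^{\epsilon}ba^{-\epsilon}b^{-1}\cdots a^{\epsilon}ba^{-\epsilon}b^{-1}]=[\psi_2(u)]$, a cyclic word literally fixed by $\t$, which is what makes the whole sequence $\|\t^{i}\psi_2(u)\|$ constant.

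The transfer to $v$ also needs an idea you do not supply. The hypothesis only yields $\|\t^{k+1}\psi_2(v)\|=\|\t^{k}\psi_2(v)\|$, at exponent $k$, and in the residual low-$\t$-count case you cannot descend to exponent $1$ by the swapped Lemma~\ref{lem:4.3.3}, since $\psi_2$ need not contain $\|u\|+1$ factors of $\t$. The paper's device is to run the structural analysis at the level of $[\t^{k}\psi_2(v)]$: the prefix $\t^{k}$ guarantees via Lemma~\ref{lem:2.4} that no proper cancellation occurs in applying one more $\t$, so the equality at exponent $k$ forces $[\t^{k}\psi_2(v)]=[a^{s_1}ba^{t_1}b^{-1}\cdots a^{s_r}ba^{t_r}b^{-1}]$, a $\t$-invariant form; applying $\t^{-k}$ shows $[\psi_2(v)]$ itself has this form, whence $\|\t\t^{\ell}\psi_2(v)\|=\|\t^{\ell}\psi_2(v)\|$ and Lemma~\ref{lem:4.1}~(ii) closes the argument. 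Until both structural steps are carried out, part (iii) --- and with it the corollary --- is not established.
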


\begin{proof}
For (i), let $\psi$ contain at least $\|u\|+1$ factors of $\s$,
and suppose that $\|\s \psi(u)\|=\|\psi(u)\|$. By Lemma
~\ref{lem:4.3.3} ~(i), we have $\|\s^{k+1} \psi(u)\|=\|\s^k
\psi(u)\|$. Then by the hypothesis of the corollary, $\|\s^{k+1}
\psi(v)\|=\|\s^k \psi(v)\|$. Finally by Lemma ~\ref{lem:4.3.3}
~(ii), we get $\|\s \psi(v)\|=\|\psi(v)\|$. The converse also
holds.

For (ii), let $\psi=\s \psi_1$, where $\psi_1$ is a chain of type
(C1) containing at least $\|u\|+1$ factors of $\s$, and suppose
that $\|\t \psi(u)\|=\|\psi(u)\|$. By Lemma ~\ref{lem:4.3.5}, we
have $\|\s \psi_1(u)\|=\|\psi_1(u)\|$. Then, by (i) of the
corollary, $\|\s \psi_1(v)\|=\|\psi_1(v)\|$. The converse is
proved similarly.

For (iii), let $\psi$ contain at least $\|u\|+2$ factors of $\s$,
and let $\psi$ end in $\t$. Assume that $\|\t
\psi(u)\|=\|\psi(u)\|$. Write
\[
\psi=\t^{\ell} \s \psi_2,
\]
where $\ell \ge 1$ and $\psi_2$ is a chain of type (C1). By Lemma
~\ref{lem:4.1} ~(ii), we have
\[
0=\|\t \psi(u)\|-\|\psi(u)\|=\|\t \t^{\ell} \psi_2
(u)\|-\|\t^{\ell} \psi_2(u)\|+\|\s \psi_2(u)\|-\|\psi_2(u)\|.
\]
Here, since $\|\t \t^{\ell} \psi_2 (u)\|-\|\t^{\ell} \psi_2(u)\|
\ge 0$ by Lemma ~\ref{lem:4.3} ~(ii) and $\|\s
\psi_2(u)\|-\|\psi_2(u)\| \ge 0$ by Lemma ~\ref{lem:4.3} ~(i), we
must have
\[
\|\t \t^{\ell} \psi_2 (u)\|=\|\t^{\ell} \psi_2(u)\| \ \text{\rm
and} \ \|\s \psi_2(u)\|=\|\psi_2(u)\|.
\]
Since $\|\s \psi_2(u)\|=\|\psi_2(u)\|$, by (i) of the corollary,
\[
\|\s \psi_2(v)\|=\|\psi_2(v)\|.
\]
Also, the following claim shows that $\|\t \t^{\ell} \psi_2
(v)\|=\|\t^{\ell} \psi_2(v)\|$. Then by Lemma ~\ref{lem:4.1}
~(ii), we have $\|\t \psi(v)\|=\|\psi(v)\|$, as required.

\medskip
{\bf Claim.} $\|\t \t^{\ell} \psi_2 (v)\|=\|\t^{\ell}
\psi_2(v)\|$.
\medskip

{\it Proof of the Claim.} Since $\|\t \psi(u)\|=\|\psi(u)\|$, in
view of (\ref{equ:4.12}), (\ref{equ:4.12.5}) and (\ref{equ:4.13})
in the proof of Lemma ~\ref{lem:4.3}, we must have
\begin{equation} \label{equ:4.14}
\text {\rm $n([\psi(u)];b, a)=n([\psi(u)];b, a^{-1})$ and
$n([\psi(u)];b, b)=0$.}
\end{equation}
Since the chain $\psi_2$ contains at least $\|u\|+1$ factors of
$\s$, by Lemma ~\ref{lem:2.4}, no proper cancellation occurs in
passing from $[\psi_2(u)]$ to $[\s \psi_2(u)]$. This yields that
\begin{equation} \label{equ:4.14.3}
\text{\rm $a^2$ or $a^{-2}$ cannot occur in $[\s \psi_2(u)]$ as a
subword.}
\end{equation}
From this, we see that, since $\ell \ge 1$,
\begin{equation} \label{equ:4.14.5}
\text{\rm no proper cancellation can occur in passing from
$[\psi(u)]$ to $[\t \psi(u)]$.}
\end{equation}
In view of (\ref{equ:4.14}), (\ref{equ:4.14.3}) and
(\ref{equ:4.14.5}), the cyclic word $[\psi(u)]$ must have the form
$$
[\psi(u)]=[a^{\epsilon}ba^{-\epsilon}b^{-1} \cdots
a^{\epsilon}ba^{-\epsilon}b^{-1}],
$$
where either $\epsilon=1$ or $\epsilon=-1$. Then, by applying
$\s^{-1}\t^{-\ell}$ to $[\psi(u)]$, we deduce that
$$
[\psi_2(u)]=[\psi(u)]=[a^{\epsilon}ba^{-\epsilon}b^{-1} \cdots
a^{\epsilon}ba^{-\epsilon}b^{-1}].
$$
It then follows that
\[
[\t^i \psi_2(u)]=[\psi_2(u)] \] for every $i \ge 0$, so that
\begin{equation} \label{equ:4.14.5.5}
\|\t^{i+1} \psi_2(u)\|=\|\t^i \psi_2(u)\|
\end{equation}
for every $i \ge 0$. In particular,
\[
\|\t^{k+1} \psi_2(u)\|=\|\t^k \psi_2(u)\|.
\]

So by the hypothesis of the corollary,
\begin{equation} \label{equ:4.14.6}
\|\t^{k+1} \psi_2(v)\|=\|\t^k \psi_2(v)\|.
\end{equation}
Then in the same way as obtaining (\ref{equ:4.14}), we get
\begin{equation} \label{equ:4.14.8}
\text {\rm $n([\t^k \psi_2(v)];b, a)=n([\t^k \psi_2(v)];b,
a^{-1})$ and $n([\t^k \psi_2(v)];b, b)=0$.}
\end{equation}
Since the chain $\t^k \psi_2$ contains at least $\|v\|+1$ factors
of $\t$, by Lemma ~\ref{lem:2.4}, no proper cancellation may occur
in passing from $[\t^k \psi_2(v)]$ to $[\t^{k+1} \psi_2(v)]$. This
together with (\ref{equ:4.14.8}) yields that
$$
[\t^k \psi_2(v)]=[a^{s_1}ba^{t_1}b^{-1} \cdots
a^{s_r}ba^{t_r}b^{-1}],
$$
where every $s_j, t_j$ is a nonzero integer. Then, by applying
$\t^{-k}$ to $[\t^k \psi_2(v)]$, we deduce that
$$
[\psi_2(v)]=[a^{s_1}ba^{t_1}b^{-1} \cdots a^{s_r}ba^{t_r}b^{-1}].
$$
Thus it follows that
\[
[\t^i \psi_2(v)]=[\psi_2(v)] \] for every $i \ge 0$, so that
\[
\|\t^{i+1} \psi_2(v)\|=\|\t^i \psi_2(v)\|
\]
for every $i \ge 0$. In particular, $\|\t \t^{\ell} \psi_2
(v)\|=\|\t^{\ell} \psi_2(v)\|$, as required. \qed

The proof of the corollary is now completed.
\end{proof}

For a Whitehead automorphism $\b$ of $F_2$, a chain $\psi$ of
Whitehead automorphisms of $F_2$ and an element $w$ in $F_2$, we
let $\|\b: \psi: w\|$ denote the maximum of $1$ and $\|\b \psi
(w)\|-\|\psi(w)\|$, that is,
$$\|\b: \psi: w\|:=\max\{1,\|\b \psi
(w)\|-\|\psi(w)\|\}.$$

Now we are ready to establish the main result of the present
section as follows.

\begin{theorem} \label{thm:4.7} Let $u, v \in F_2$ with $\|u\| \ge \|v\|$, and
let $\Omega$ be the set of all chains of type (C1) or (C2) of
length less than or equal to $2\|u\|+5$. Let $\Omega_1$ be the
subset of $\Omega$ consisting of all chains of type (C1), and let
$\Omega_2$ be the subset of $\Omega$ consisting of all chains of
type (C2). Put $k=\|u\|+1$. Suppose that $u$ and $v$ have the
property that
\[
\begin{aligned}
&\|\s^{k+1} \psi_1(u)\|=\|\s^k \psi_1(u)\| \ \text{if and only if}
\ \|\s^{k+1} \psi_1(v)\|=\|\s^k \psi_1(v)\|; \\
&\|\t^{k+1} \psi_1(u)\|=\|\t^k \psi_1(u)\| \ \text{if and only if}
\ \|\t^{k+1} \psi_1(v)\|=\|\t^k \psi_1(v)\|,
\end{aligned}
\]
for every $\psi_1 \in \Omega_1$, and that
\[
\begin{aligned}
&\|\s^{-k-1} \psi_2(u)\|=\|\s^{-k} \psi_2(u)\| \ \text{if and only
if} \ \|\s^{-k-1} \psi_2(v)\|=\|\s^{-k} \psi_2(v)\|; \\
&\|\t^{-k-1} \psi_2(u)\|=\|\t^{-k} \psi_2(u)\| \ \text{if and only
if} \ \|\t^{-k-1} \psi_2(v)\|=\|\t^{-k} \psi_2(v)\|,
\end{aligned}
\]
for every $\psi_2 \in \Omega_2$. Then $u$ and $v$ are boundedly
translation equivalent in $F_2$.

More specifically,
\[
\min \Delta \le {\|\phi(u)\| \over \|\phi(v)\|} \le \max \Delta
\]
for every automorphism $\phi$ of $F_2$, where
\[ \Delta := \{ {\|
\psi(u)\| \over \| \psi(v)\|}, {\| \a: \psi_1: u\| \over \| \a :
\psi_1: v \|}, {\| \a^{-1}: \psi_2: u\| \over \| \a^{-1}: \psi_2:
v\|} \, | \, \psi \in \Omega, \psi_i \in \Omega_i, \a=\s \
\text{or} \ \t \}.
\]
(Obviously, $\Delta$ is a finite set consisting of positive real
numbers.)
\end{theorem}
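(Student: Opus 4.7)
\medskip
\noindent\textbf{Proof plan for Theorem~\ref{thm:4.7}.} By Lemma~\ref{lem:2.3} every $\phi \in \mathrm{Aut}(F_2)$ factors as $\phi \equiv \b \phi'$ with $\b$ of type (W1); since (W1) automorphisms merely permute $\Sigma^{\pm 1}$, they preserve cyclic length, so $\|\phi(u)\|/\|\phi(v)\| = \|\phi'(u)\|/\|\phi'(v)\|$ and it is enough to prove the bound for chains $\phi'$. My first preparatory step is to upgrade the two ``iff'' hypotheses from $\Omega_1 \cup \Omega_2$ to chains of arbitrary length: a short induction on $|\psi'|$ using Corollary~\ref{cor:4.4.5} (and its symmetric and (C2) analogs) does this, the hypothesis that the equivalences hold for all strictly shorter chains being supplied inductively and the base case $|\psi'| \le 2\|u\|+5$ being the stated assumption on $\Omega_1 \cup \Omega_2$.

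Before tackling the main bound I would establish the auxiliary claim that $\|\a:\psi:u\|/\|\a:\psi:v\| \in [\min\Delta, \max\Delta]$ for every chain $\psi$ and every $\a$ matching its type. For $|\psi| \le 2\|u\|+5$ the quotient lies in $\Delta$ by construction. For longer $\psi$, Lemma~\ref{lem:4.1} (and its three symmetric analogs) decomposes
\[
d_u := \|\a\psi(u)\|-\|\psi(u)\| = X_u + m\,Y_u, \qquad d_v := \|\a\psi(v)\|-\|\psi(v)\| = X_v + m\,Y_v,
\]
where $X_\bullet / X_\bullet$ and $Y_\bullet / Y_\bullet$ are $\|\a':\psi'':\bullet\|$-type ratios for strictly shorter chains; by induction these lie (when nonzero) in $[\min\Delta, \max\Delta]$, and a weighted mediant argument forces $d_u/d_v$ into the same interval. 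Vanishing cases such as $X_u = 0$ are handled via the extended hypothesis together with Corollary~\ref{cor:4.5} (or its symmetric/(C2) analog), which guarantees $X_u = 0 \Longleftrightarrow X_v = 0$.

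With the auxiliary claim in hand, I would run the main strong induction on $|\phi'|$ to get $\|\phi'(u)\|/\|\phi'(v)\| \in [\min\Delta, \max\Delta]$. The base $|\phi'| \le 2\|u\|+5$ is immediate since then $\phi' \in \Omega$. For $|\phi'| > 2\|u\|+5$, write $\phi' = \a\psi$ with $|\psi| \ge 2\|u\|+5$, so that one of $\s, \t$ (or $\s^{-1}, \t^{-1}$) occurs at least $\|u\|+3$ times in $\psi$ and Lemma~\ref{lem:4.3} (or its analog) yields $d_u, d_v \ge 0$. Corollary~\ref{cor:4.5} together with the extended hypothesis provides $d_u = 0 \Longleftrightarrow d_v = 0$: if both vanish, the outer induction applied to $\psi$ finishes the step; otherwise both are positive integers and the mediant inequality
\[
\min\Bigl\{\tfrac{\|\psi(u)\|}{\|\psi(v)\|},\, \tfrac{d_u}{d_v}\Bigr\} \le \tfrac{\|\phi'(u)\|}{\|\phi'(v)\|} \le \max\Bigl\{\tfrac{\|\psi(u)\|}{\|\psi(v)\|},\, \tfrac{d_u}{d_v}\Bigr\}
\]
reduces matters to the outer induction hypothesis on $\psi$ (for the first extreme) and to the auxiliary claim (for the second, since $d_u/d_v = \|\a:\psi:u\|/\|\a:\psi:v\|$ when both are positive integers).

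The hard part is orchestrating the case analysis. Which of Corollary~\ref{cor:4.5}(i)--(iii) to invoke depends jointly on whether $\a$ is $\s$ or $\t$, on the $\s$-count vs.\ $\t$-count in $\psi$, on whether $\psi$ ends in $\s$ or $\t$, and on the (C1)/(C2) dichotomy; in some subcases the corollary must be applied not to $\psi$ itself but to a prefix $\psi_1$ obtained by stripping off a leading $\s$. One must check, in every branch, that Lemma~\ref{lem:4.1}'s decomposition really produces strictly shorter chains of exactly the form required by the two inductions, and that the various ``iff'' conclusions cover every boundary between zero and positive differences. Once this case split is laid out, each individual subcase is a mechanical (weighted) mediant-inequality computation; synchronising the propagation of the hypothesis, the auxiliary induction, and the outer induction across this case split is the delicate point.
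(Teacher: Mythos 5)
Your overall architecture coincides with the paper's: factor $\phi\equiv\b\phi'$ via Lemma~\ref{lem:2.3}, propagate the two ``iff'' hypotheses to arbitrarily long chains by induction via Corollary~\ref{cor:4.4.5}, and then run a simultaneous induction on $|\phi'|$ in which the quantities $\|\psi(\cdot)\|$ and $\|\a:\psi:\cdot\|$ are controlled together, using the decomposition of Lemma~\ref{lem:4.1} and (weighted) mediant inequalities, with Corollary~\ref{cor:4.5} deciding when the increments vanish. Splitting this into a self-contained auxiliary induction for the $\|\a:\psi:\cdot\|$ ratios followed by the main induction is harmless, since the paper's Claims B and C indeed use only the increment-type induction hypotheses.

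However, there is one concrete gap. In the main induction step you write $\phi'=\a\psi$, set $d_\bullet=\|\a\psi(\bullet)\|-\|\psi(\bullet)\|$, and assert that ``Corollary~\ref{cor:4.5} together with the extended hypothesis provides $d_u=0\Longleftrightarrow d_v=0$,'' reducing everything to the two clean cases (both zero, or both positive and the mediant applies). This equivalence is \emph{not} available in the configuration $\phi'=\t\phi_1$ with $\phi_1=\s\phi_2$ containing many factors of $\s$ (the paper's Case A.1.1): Corollary~\ref{cor:4.5}~(iii) requires the chain to end in $\t$, and part (ii) only yields $\|\s\phi_2(u)\|=\|\phi_2(u)\|$ and $\|\s\phi_2(v)\|=\|\phi_2(v)\|$, which by Lemma~\ref{lem:4.1}~(ii) gives $\|\t\phi_1(\bullet)\|-\|\phi_1(\bullet)\|=\|\t\phi_2(\bullet)\|-\|\phi_2(\bullet)\|$ --- a quantity that need not vanish for both $u$ and $v$, and for which no ``iff'' can be extracted from the hypotheses (converting $\|\t\phi_2(u)\|=\|\phi_2(u)\|$ into the high-power condition would need Lemma~\ref{lem:4.3.3} with $\s,\t$ interchanged, i.e.\ many factors of $\t$ in $\phi_2$, which is not guaranteed here). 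So the mixed case $d_u=0$, $d_v>0$ is not excluded by your argument, and the mediant bound breaks down there. The paper's fix is different in kind: it uses the equalities $\|\phi_1(\bullet)\|=\|\phi_2(\bullet)\|$ to rewrite $\|\phi'(\bullet)\|=\|\t\phi_2(\bullet)\|$ outright and invokes the induction hypothesis on the strictly shorter chain $\t\phi_2$, never needing $d_u=0\Longleftrightarrow d_v=0$. A cousin of the same issue appears in the auxiliary claim (the paper's Case B.2), where the needed equivalence $\|\s\phi_1(u)\|=\|\phi_1(u)\|\Longleftrightarrow\|\s\phi_1(v)\|=\|\phi_1(v)\|$ for a chain rich in $\t$ is not a direct instance of Corollary~\ref{cor:4.5} but is obtained by a small contradiction argument through Lemma~\ref{lem:4.3.3}. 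Your plan flags the case analysis as delicate but does not supply these two maneuvers, and without them the proof does not close.
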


\begin{proof} Let $\phi$ be an automorphism of $F_2$. By Lemma
~\ref{lem:2.3}, $\phi$ can be represented as $$\phi \equiv
\b\phi',$$ where $\b$ is a Whitehead automorphism of $F_2$ of type
(W1) and $\phi'$ is of type either (C1) or (C2). We proceed with
the proof of the theorem by induction on $|\phi'|$. Letting
$\phi'$ be a chain of type (C1) with $|\phi'|>2\|u\|+5$ (the case
for (C2) is similar), assume that
\[
\begin{aligned}
&\|\s^{k+1} \psi(u)\|=\|\s^k \psi(u)\| \ \text{if and only if}
\ \|\s^{k+1} \psi(v)\|=\|\s^k \psi(v)\|; \\
&\|\t^{k+1} \psi(u)\|=\|\t^k \psi(u)\| \ \text{if and only if} \
\|\t^{k+1} \psi(v)\|=\|\t^k \psi(v)\|,
\end{aligned}
\]
and that
$$\min \Delta \le {\|\psi(u)\| \over
\|\psi(v)\|}, {\|\s: \psi: u\| \over \|\s : \psi: v\|}, {\|\t:
\psi: u\| \over \|\t : \psi: v\|} \le \max \Delta,$$ for every
chain $\psi$ of type (C1) with $|\psi| < |\phi'|$.

By Corollary ~\ref{cor:4.4.5}, it is easy to get
\[
\begin{aligned}
&\|\s^{k+1} \phi'(u)\|=\|\s^k \phi'(u)\| \ \text{if and only if}
\ \|\s^{k+1} \phi'(v)\|=\|\s^k \phi'(v)\|; \\
&\|\t^{k+1} \phi'(u)\|=\|\t^k \phi'(u)\| \ \text{if and only if} \
\|\t^{k+1} \phi'(v)\|=\|\t^k \phi'(v)\|.
\end{aligned}
\]
In the following Claims A, B and C, we shall prove that
$$\min \Delta \le {\|\phi'(u)\| \over \|\phi'(v)\|}, {\|\s: \phi': u\| \over \|\s : \phi': v\|},
{\|\t: \phi': u\| \over \|\t : \phi': v\|} \le \max \Delta,$$
which is clearly equivalent to showing that
$$\min \Delta \le
{\|\phi(u)\| \over \|\phi(v)\|}, {\|\s: \phi: u\| \over \|\s :
\phi: v\|}, {\|\t: \phi: u\| \over \|\t : \phi: v\|} \le \max
\Delta.$$

Suppose that $\phi'$ ends in $\t$ (the case where $\phi'$ ends in
$\s$ is analogous).

\medskip
\noindent
{\bf Claim A.}
$$\min \Delta \le {\|\phi'(u)\| \over \|\phi'(v)\|} \le \max \Delta$$
\medskip

\noindent {\it Proof of Claim ~A.} Since $\phi'$ ends in $\t$, we
may write
$$\phi'=\t \phi_1,$$
where $\phi_1$ is a chain of type (C1). Then obviously
\begin{equation}
\begin{aligned} \label{equ:a.1}
\| \phi'(u)\|&=\|\t \phi_1(u)\|-\|\phi_1(u)\|+\|\phi_1(u)\|; \\
\| \phi'(v)\|&=\|\t \phi_1(v)\|-\|\phi_1(v)\|+\|\phi_1(v)\|.
\end{aligned}
\end{equation}
If both $\|\t \phi_1(u)\| \neq \|\phi_1(u)\|$ and $\|\t
\phi_1(v)\| \neq \|\phi_1(v)\|$, then equalities (\ref{equ:a.1})
can be rephrased as
\begin{equation}
\begin{aligned} \label{equ:a.2}
\|\phi'(u)\|&=\|\t : \phi_1 :u\|+\|\phi_1(u)\|;
\\
\|\phi'(v)\|&=\|\t : \phi_1 : v\|+\|\phi_1(v)\|.
\end{aligned}
\end{equation}
Since
$$\min \Delta \le
{\|\phi_1(u)\| \over \|\phi_1(v)\|}, {\|\t: \phi_1: u\| \over \|\t
: \phi_1: v\|} \le \max \Delta$$ by the induction hypothesis, we
obtain
$$\min \Delta \le
{\|\phi'(u)\| \over \|\phi'(v)\|}\le \max \Delta,$$
as required.

So assume that
\begin{equation} \label{equ:a.3}
\|\t \phi_1(u)\|=\|\phi_1(u)\| \ \text{\rm or} \ \|\t
\phi_1(v)\|=\|\phi_1(v)\|.
\end{equation}
Clearly the chain $\phi_1$ has length $|\phi_1| = |\phi'|-1 \ge
2\|u\|+5$. Hence either $\s$ or $\t$ occurs at least $\|u\|+3$
times in $\phi_1$. We consider two cases accordingly.

\medskip
\noindent {\bf Case A.1.} $\s$ occurs at least $\|u\|+3$ times in
$\phi_1$.
\medskip

Since $\phi_1$ is a chain of type (C1), $\phi_1$ ends in either
$\s$ or $\t$.

\medskip
\noindent {\bf Case A.1.1.} $\phi_1$ ends in $\s$.
\medskip

Write
$$\phi_1=\s \phi_2,$$
where $\phi_2$ is a chain of type (C1). In view of Corollary
~\ref{cor:4.5} ~(ii), our assumption (\ref{equ:a.3}) yields that
\begin{equation} \label{equ:a.4}
\|\s \phi_2(u)\|=\|\phi_2(u)\|\ \text {\rm and} \ \|\s
\phi_2(v)\|=\|\phi_2(v)\|.
\end{equation}
This together with Lemma ~\ref{lem:4.1} ~(ii)
implies that
\begin{equation}
\begin{aligned} \label{equ:a.5}
\|\t \phi_1(u)\|-\|\phi_1(u)\|&=\|\t \phi_2(u)\|-\|\phi_2(u)\|; \\
\|\t \phi_1(v)\|-\|\phi_1(v)\|&=\|\t \phi_2(v)\|-\|\phi_2(v)\|.
\end{aligned}
\end{equation}
Since $\phi_1=\s \phi_2$, we obtain from (\ref{equ:a.4}) that
$\|\phi_1(u)\|=\|\phi_2(u)\|$ and $\|\phi_1(v)\|=\|\phi_2(v)\|$,
so that, from (\ref{equ:a.5}),
\begin{equation}
\begin{aligned} \label{equ:a.6}
\|\t \phi_1(u)\|&=\|\t \phi_2(u)\|; \\
\|\t \phi_1(v)\|&=\|\t \phi_2(v)\|.
\end{aligned}
\end{equation}
Since $\phi'=\t \phi_1$, (\ref{equ:a.6}) implies that
$${\|\phi'(u)\| \over \|\phi'(v)\|}={\|\t \phi_2(u)\| \over \|\t \phi_2(v)\|},$$
and thus, by the induction hypothesis,
$$\min \Delta \le {\|\phi'(u)\| \over \|\phi'(v)\|} \le \max \Delta,$$
as desired.

\medskip
\noindent {\bf Case A.1.2.} $\phi_1$ ends in $\t$.
\medskip

In view of Corollary ~\ref{cor:4.5} ~(iii), our assumption
(\ref{equ:a.3}) yields that both $\|\t \phi_1(u)\|=\|\phi_1(u)\|$
and $\|\t \phi_1(v)\|=\|\phi_1(v)\|$. We then have from
(\ref{equ:a.1}) that
$${\|\phi'(u)\| \over \|\phi'(v)\|}={\|\phi_1(u)\| \over \|\phi_1(v)\|},$$
so that, by the induction hypothesis,
$$\min \Delta \le {\|\phi'(u)\| \over \|\phi'(v)\|} \le \max \Delta,$$
as required.

\medskip
\noindent {\bf Case A.2.} $\t$ occurs at least $\|u\|+3$ times in
$\phi_1$.
\medskip

In view of Corollary ~\ref{cor:4.5} ~(i) with $\t$ in place of
$\s$, we have from (\ref{equ:a.3}) both $\|\t
\phi_1(u)\|=\|\phi_1(u)\|$ and $\|\t \phi_1(v)\|=\|\phi_1(v)\|$.
It then follows from (\ref{equ:a.1}) that
$${\|\phi'(u)\| \over \|\phi'(v)\|}={\|\phi_1(u)\| \over \|\phi_1(v)\|},$$
so that, by the induction hypothesis,
$$\min \Delta \le {\|\phi'(u)\| \over \|\phi'(v)\|} \le \max \Delta,$$
as desired. \qed

\medskip
\noindent
{\bf Claim B.}
$$\min \Delta \le {\|\s: \phi': u\| \over \|\s : \phi': v\|}\le \max \Delta$$
\medskip

\noindent {\it Proof of Claim ~B.} As in the proof of Claim A,
writing
$$\phi'=\t \phi_1,$$
where $\phi_1$ is a chain of type (C1), we consider two cases
separately.

\medskip
\noindent {\bf Case B.1.} $\s$ occurs at least $\|u\|+3$ times in
$\phi_1$.
\medskip

In this case, write $$\phi_1=\t^{m-1} \s \phi_2,$$ where $m \ge 1$
and $\phi_2$ is a chain of type (C1). Since $\phi'=\t \phi_1$,
$$\phi'=\t^m \s \phi_2.$$ Then by Lemma ~\ref{lem:4.1} ~(i), we have
\begin{equation}
\begin{aligned} \label{equ:b.1}
\|\s \phi'(u)\|-\|\phi'(u)\|&=\|\s \t^m \phi_2(u)\|-\|\t^m \phi_2(u)\|+m(\|\s \phi_2(u)\|-\|\phi_2(u)\|); \\
\|\s \phi'(v)\|-\|\phi'(v)\|&=\|\s \t^m \phi_2(v)\|-\|\t^m \phi_2(v)\|+m(\|\s \phi_2(v)\|-\|\phi_2(v)\|).
\end{aligned}
\end{equation}
Here, since $\phi_2$ is a chain of type (C1) which contains at
least $\|u\|+2$ factors of $\s$, Corollary ~\ref{cor:4.5} ~(i)
yields that $\|\s \t^m \phi_2(u)\|=\|\t^m \phi_2(u)\|$ if and only
if $\|\s \t^m \phi_2(v)\|=\|\t^m \phi_2(v)\|$. So if $\|\s \t^m
\phi_2(u)\|=\|\t^m \phi_2(u)\|$ or $\|\s \t^m \phi_2(v)\|=\|\t^m
\phi_2(v)\|$, then we get from (\ref{equ:b.1}) that
\[
\begin{aligned}
\|\s \phi'(u)\|-\|\phi'(u)\|&=m(\|\s \phi_2(u)\|-\|\phi_2(u)\|); \\
\|\s \phi'(v)\|-\|\phi'(v)\|&=m(\|\s \phi_2(v)\|-\|\phi_2(v)\|).
\end{aligned}
\]
This gives us
$${\|\s: \phi': u\| \over \|\s : \phi': v\|}=
{\|\s: \phi_2: u\| \over \|\s : \phi_2: v\|},$$ and hence the
desired inequalities
$$\min \Delta \le {\|\s: \phi': u\| \over \|\s : \phi': v\|}
\le \max \Delta$$ follow by the induction hypothesis.

Now let us assume that
\[
\text{ \rm $\|\s \t^m \phi_2(u)\| \neq \|\t^m \phi_2(u)\|$ and
$\|\s \t^m \phi_2(v)\| \neq \|\t^m \phi_2(v)\|$.}
\]
Again by Corollary ~\ref{cor:4.5} ~(i), we have $\|\s
\phi_2(u)\|=\|\phi_2(u)\|$ if and only if $\|\s
\phi_2(v)\|=\|\phi_2(v)\|$. Hence if $\|\s \phi_2(u)\|=\|
\phi_2(u)\|$ or $\|\s \phi_2(v)\|=\|\phi_2(v)\|$, then, from
(\ref{equ:b.1}),
\[
\begin{aligned}
\|\s \phi'(u)\|-\|\phi'(u)\|&=\|\s \t^m \phi_2(u)\|-\|\t^m \phi_2(u)\|; \\
\|\s \phi'(v)\|-\|\phi'(v)\|&=\|\s \t^m \phi_2(v)\|-\|\t^m
\phi_2(v)\|.
\end{aligned}
\]
This yields
$${\|\s: \phi': u\| \over \|\s : \phi': v\|}=
{\|\s: \t^m \phi_2: u\| \over \|\s : \t^m \phi_2: v\|},$$ which
gives us
$$\min \Delta \le {\|\s: \phi': u\| \over \|\s : \phi': v\|}
\le \max \Delta$$ by the induction hypothesis.

So let us further assume that
\[
\text {\rm $\|\s \phi_2(u)\| \neq \|\phi_2(u)\|$ and $\|\s
\phi_2(v)\| \neq \|\phi_2(v)\|$.}
\]
It then follows from (\ref{equ:b.1}) that
\begin{equation}
\begin{aligned} \label{equ:b.4}
\|\s \phi'(u)\|-\|\phi'(u)\|&=\|\s : \t^m \phi_2: u\|+m\|\s : \phi_2: u\|; \\
\|\s \phi'(v)\|-\|\phi'(v)\|&=\|\s : \t^m \phi_2 : v\|+m\|\s :
\phi_2: v\|.
\end{aligned}
\end{equation}
Since
$$\min \Delta \le {\|\s: \t^m \phi_2: u\| \over \|\s : \t^m \phi_2: v\|},
{\|\s: \phi_2: u\| \over \|\s : \phi_2: v\|} \le \max \Delta$$ by
the induction hypothesis, we have from (\ref{equ:b.4}) that
$$\min \Delta \le {\|\s: \phi': u\| \over \|\s : \phi': v\|}
\le \max \Delta,$$ as required.

\medskip
\noindent {\bf Case B.2.} $\t$ occurs at least $\|u\|+3$ times in
$\phi_1$.
\medskip

In this case, it follows from Lemma ~\ref{lem:4.1} ~(ii) with $\s, \t$ interchanged and $m=0$ that
\begin{equation}
\begin{aligned} \label{equ:b.2}
\|\s \phi'(u)\|-\|\phi'(u)\|&=\|\s \phi_1(u)\|-\|\phi_1(u)\|+\|\t \phi_1(u)\|-\|\phi_1(u)\|; \\
\|\s \phi'(v)\|-\|\phi'(v)\|&=\|\s \phi_1(v)\|-\|\phi_1(v)\|+\|\t
\phi_1(v)\|-\|\phi_1(v)\|.
\end{aligned}
\end{equation}
Here, by Corollary ~\ref{cor:4.5} ~(i) with $\t$ in place of $\s$,
we have $\|\t \phi_1(u)\|=\|\phi_1(u)\|$ if and only if $\|\t
\phi_1(v)\|=\|\phi_1(v)\|$. Hence if $\|\t
\phi_1(u)\|=\|\phi_1(u)\|$ or $\|\t \phi_1(v)\|=\|\phi_1(v)\|$,
then, by (\ref{equ:b.2}),
\[
\begin{aligned}
\|\s \phi'(u)\|-\|\phi'(u)\|&=\|\s \phi_1(u)\|-\|\phi_1(u)\|; \\
\|\s \phi'(v)\|-\|\phi'(v)\|&=\|\s \phi_1(v)\|-\|\phi_1(v)\|,
\end{aligned}
\]
and thus
\[
\begin{aligned}
\|\s : \phi' : u \|&=\|\s : \phi_1 : u \|;\\
\|\s : \phi' : v \|&=\|\s : \phi_1 : v \|.
\end{aligned}
\]
Then by the induction hypothesis,
$$\min \Delta \le
{\|\s: \phi': u\| \over \|\s : \phi': v\|} \le \max \Delta,$$ as
desired.

Now assume that
\[
\text{\rm $\|\t \phi_1(u)\| \neq \|\phi_1(u)\|$ and $\|\t
\phi_1(v)\| \neq \|\phi_1(v)\|$.}
\]
We shall show that $\|\s \phi_1(u)\|=\|\phi_1(u)\|$ if and only if
$\|\s \phi_1(v)\|=\|\phi_1(v)\|$. Let $\|\s
\phi_1(u)\|=\|\phi_1(u)\|$. If $\phi_1$ ends in $\s$, then, by
Corollary ~\ref{cor:4.5} ~(iii) with $\s, \t$ interchanged, we
have $\|\s \phi_1(v)\|=\|\phi_1(v)\|$. On the other hand, if
$\phi_1$ ends in $\t$, then, by Corollary ~\ref{cor:4.5} ~(ii)
with $\s, \t$ interchanged, we get $\|\t
\phi_2(u)\|=\|\phi_2(u)\|$, where $\phi_1=\t \phi_2$. But then
from Lemma ~\ref{lem:4.3.3} ~(i) with $\s$, $\t$ interchanged, it
follows that $\|\t^2 \phi_2(u)\| = \|\t \phi_2(u)\|$, namely,
$\|\t \phi_1(u)\| = \|\phi_1(u)\|$, which contradicts our
assumption $\|\t \phi_1(u)\| \neq \|\phi_1(u)\|$. Therefore, we
must have $\|\s \phi_1(v)\|=\|\phi_1(v)\|$. Conversely, if $\|\s
\phi_1(v)\|=\|\phi_1(v)\|$, then, for a similar reason, it must
follow that $\|\s \phi_1(u)\|=\|\phi_1(u)\|$.

Thus if $\|\s \phi_1(u)\|=\|\phi_1(u)\|$ or $\|\s
\phi_1(v)\|=\|\phi_1(v)\|$, then, from (\ref{equ:b.2}),
\[
\begin{aligned}
\|\s \phi'(u)\|-\|\phi'(u)\|&=\|\t \phi_1(u)\|-\|\phi_1(u)\|; \\
\|\s \phi'(v)\|-\|\phi'(v)\|&=\|\t \phi_1(v)\|-\|\phi_1(v)\|,
\end{aligned}
\]
and so
\[
\begin{aligned}
\|\s : \phi' : u \|&=\|\t : \phi_1 : u \|;\\
\|\s : \phi' : v \|&=\|\t : \phi_1 : v \|.
\end{aligned}
\]
Then by the induction hypothesis,
$$\min \Delta \le
{\|\s: \phi': u\| \over \|\s : \phi': v\|} \le \max \Delta,$$ as
required.

So assume further that
\[
\text{\rm $\|\s \phi_1(u)\| \neq \|\phi_1(u)\|$ and $\|\s
\phi_1(v)\| \neq \|\phi_1(v)\|$.}
\]
It follows from (\ref{equ:b.2}) that
\begin{equation}
\begin{aligned} \label{equ:b.3}
\|\s \phi'(u)\|-\|\phi'(u)\|&=\|\s : \phi_1: u\|+\|\t : \phi_1: u\|; \\
\|\s \phi'(v)\|-\|\phi'(v)\|&=\|\s :\phi_1 :v\|+\|\t :\phi_1: v\|.
\end{aligned}
\end{equation}
Since
$$\min \Delta \le  {\|\t: \phi_1: u\| \over \|\t : \phi_1: v\|},
{\|\s: \phi_1: u\| \over \|\s : \phi_1: v\|} \le \max \Delta$$ by
the induction hypothesis, we obtain from (\ref{equ:b.3}) that
$$\min \Delta \le
{\|\s: \phi': u\| \over \|\s : \phi': v\|} \le \max \Delta,$$ as
desired. \qed

\medskip
\noindent
{\bf Claim C.}
$$\min \Delta \le
{\|\t: \phi': u\| \over \|\t : \phi': v\|} \le \max \Delta$$
\medskip

\noindent {\it Proof of Claim C.} As in the proof of Claims A and
B, writing
$$\phi'=\t \phi_1,$$
where $\phi_1$ is a chain of type (C1), we consider two cases
separately.

\medskip
\noindent {\bf Case C.1.} $\s$ occurs at least $\|u\|+3$ times in
$\phi_1$.
\medskip

As in Case B.1, write $$\phi_1=\t^{m-1} \s \phi_2,$$ where $m \ge
1$ and $\phi_2$ is a chain of type (C1). Since $\phi'=\t
\phi_1$,
$$\phi'=\t^m \s \phi_2.$$
It then follows from Lemma ~\ref{lem:4.1} ~(ii)
that
\begin{equation}
\begin{aligned}
\|\t \phi'(u)\|-\|\phi'(u)\|&=\|\t \t^m \phi_2(u)\|-\|\t^m \phi_2(u)\|+\|\s \phi_2(u)\|-\|\phi_2(u)\|; \\
\|\t \phi'(v)\|-\|\phi'(v)\|&=\|\t \t^m \phi_2(v)\|-\|\t^m \phi_2(v)\|+\|\s \phi_2(v)\|-\|\phi_2(v)\|.
\end{aligned}
\end{equation}
By Corollary ~\ref{cor:4.5} ~(i), we have $\|\s
\phi_2(u)\|=\|\phi_2(u)\|$ if and only if $\|\s
\phi_2(v)\|=\|\phi_2(v)\|$. Also by Corollary ~\ref{cor:4.5}
~(iii), we get $\|\t \t^m \phi_2(u)\|=\|\t^m \phi_2(u)\|$ if and
only if $\|\t \t^m \phi_2(v)\|=\|\t^m \phi_2(v)\|$. Hence we can
apply a similar argument as in Cases ~B.1 and B.2 to obtain the
desired inequalities
$$\min \Delta \le
{\|\t: \phi': u\| \over \|\t : \phi': v\|} \le \max \Delta.$$

\medskip
\noindent {\bf Case C.2.} $\t$ occurs at least $\|u\|+3$ times in
$\phi_1$.
\medskip

By Lemma ~\ref{lem:4.1} ~(i) with $\s$, $\t$ interchanged and
$m=0$, we have
\[
\begin{aligned}
\|\t \phi'(u)\|-\|\phi'(u)\|&=\|\t \phi_1(u)\|-\|\phi_1(u)\|; \\
\|\t \phi'(v)\|-\|\phi'(v)\|&=\|\t \phi_1(v)\|-\|\phi_1(v)\|.
\end{aligned}
\]
It then follows that
\[
\begin{aligned}
\|\t : \phi' : u \|&=\|\t : \phi_1 : u \|;\\
\|\t : \phi' : v \|&=\|\t : \phi_1 : v \|,
\end{aligned}
\]
so that
$$\min \Delta \le
{\|\t: \phi': u\| \over \|\t : \phi': v\|} \le \max \Delta$$ by
the induction hypothesis. This completes the proof of Claim ~C.
\qed

Now the theorem is completely proved.
\end{proof}

The following theorem is the converse of Theorem ~\ref{thm:4.7}.

\begin{theorem} \label{thm:4.8} Let $u, v \in F_2$ with $\|u\| \ge \|v\|$, and
$\Omega, \Omega_1$ and $\Omega_2$ be defined as in the statement
of Theorem ~\ref{thm:4.7}. Put $k=\|u\|+1$. Suppose that $u$ and
$v$ are boundedly translation equivalent in $F_2$. Then
\[
\begin{aligned}
&\|\s^{k+1} \psi_1(u)\|=\|\s^k \psi_1(u)\| \ \text{if and only if}
\ \|\s^{k+1} \psi_1(v)\|=\|\s^k \psi_1(v)\|; \\
&\|\t^{k+1} \psi_1(u)\|=\|\t^k \psi_1(u)\| \ \text{if and only if}
\ \|\t^{k+1} \psi_1(v)\|=\|\t^k \psi_1(v)\|,
\end{aligned}
\]
for every $\psi_1 \in \Omega_1$, and
\[
\begin{aligned}
&\|\s^{-k-1} \psi_2(u)\|=\|\s^{-k} \psi_2(u)\| \ \text{if and only
if} \ \|\s^{-k-1} \psi_2(v)\|=\|\s^{-k} \psi_2(v)\|; \\
&\|\t^{-k-1} \psi_2(u)\|=\|\t^{-k} \psi_2(u)\| \ \text{if and only
if} \ \|\t^{-k-1} \psi_2(v)\|=\|\t^{-k} \psi_2(v)\|,
\end{aligned}
\]
for every $\psi_2 \in \Omega_2$.
\end{theorem}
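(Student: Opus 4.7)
\medskip
\noindent\emph{Proof plan.}
The plan is to prove the contrapositive: if one of the four \emph{if and only if} statements fails then $u$ and $v$ are not boundedly translation equivalent in $F_2$. I will outline the argument for the first statement (the one for $\s$ with $\psi_1 \in \Omega_1$); the $\t$-statement for $\Omega_1$ and the two statements for $\Omega_2$ go through verbatim using the versions of Lemmas~\ref{lem:4.1}, \ref{lem:4.3}, \ref{lem:4.3.3} in which $\s$ and $\t$ are interchanged, or (C1) is replaced by (C2) and $\s, \t$ are replaced by $\s^{-1}, \t^{-1}$, as noted in the paper.

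The key step is a \emph{linearization}: for any chain $\psi_1$ of type (C1) and any nontrivial $w \in F_2$ with $\|w\| \le \|u\|$, the sequence $n \mapsto \|\s^n \psi_1(w)\|$ is an arithmetic progression for $n \ge k$. Indeed, for every $j \ge k$ the chain $\s^j \psi_1$ contains at least $k = \|u\|+1 \ge \|w\|+1$ factors of $\s$, so that $\s^{j+1}\psi_1$ contains at least $\|w\|+2$ factors of $\s$, and Lemma~\ref{lem:4.1}~(i) applied with $\psi = \s^{j+1}\psi_1$ and $m = 0$ gives
\[
\|\s^{j+2}\psi_1(w)\| - \|\s^{j+1}\psi_1(w)\| = \|\s^{j+1}\psi_1(w)\| - \|\s^{j}\psi_1(w)\|.
\]
Setting $d_w := \|\s^{k+1}\psi_1(w)\| - \|\s^{k}\psi_1(w)\|$ and iterating, I obtain
\[
\|\s^n \psi_1(w)\| = \|\s^k \psi_1(w)\| + (n-k)\, d_w \qquad \text{for every } n \ge k,
\]
with $d_w \ge 0$ by Lemma~\ref{lem:4.3}~(i).

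Now suppose for contradiction that $\|\s^{k+1}\psi_1(u)\| = \|\s^k\psi_1(u)\|$ but $\|\s^{k+1}\psi_1(v)\| \ne \|\s^k\psi_1(v)\|$. Then $d_u = 0$ and $d_v > 0$, so $\|\s^n\psi_1(u)\|$ is constant in $n$ while $\|\s^n\psi_1(v)\|$ grows linearly to infinity. Taking the automorphisms $\phi_n := \s^n \psi_1$ of $F_2$, I then obtain
\[
\frac{\|\phi_n(u)\|}{\|\phi_n(v)\|} = \frac{\|\s^k \psi_1(u)\|}{\|\s^k \psi_1(v)\| + (n-k)\, d_v} \longrightarrow 0 \qquad (n \to \infty),
\]
which contradicts bounded translation equivalence. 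The reverse case $d_u > 0$, $d_v = 0$ gives, by the same computation, ratio $\to \infty$, again contradicting bounded translation equivalence. (If $u$ is trivial then bounded translation equivalence forces $v$ to be trivial too, in which case every cyclic length equality in the conclusion is vacuously satisfied.)

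The only real technical point I expect will need careful book-keeping is the linearization step itself, namely checking at every stage that the chain to which Lemma~\ref{lem:4.1}~(i) is applied contains the required number of factors of $\s$; prepending $\s^k$ with $k = \|u\|+1$ handles this uniformly for both $u$ and $v$ since $\|u\| \ge \|v\|$. Notably, the length bound $|\psi_1| \le 2\|u\|+5$ built into the definitions of $\Omega_1$ and $\Omega_2$ plays no role in this direction, so the argument in fact establishes the stated equivalences for \emph{every} chain of type (C1) or (C2), not merely for those in $\Omega_1 \cup \Omega_2$.
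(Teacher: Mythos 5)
Your proposal is correct and follows essentially the same route as the paper: assume one equivalence fails, use Lemma~\ref{lem:4.1}~(i) with $m=0$ repeatedly to show $n \mapsto \|\s^n\psi_1(w)\|$ is eventually an arithmetic progression with nonnegative common difference (nonnegativity from Lemma~\ref{lem:4.3}~(i)), and conclude that the ratio $\|\s^n\psi_1(u)\|/\|\s^n\psi_1(v)\|$ tends to $0$ or $\infty$, contradicting bounded translation equivalence. Your added remarks (the symmetric cases, the trivial-element caveat, and the observation that the length bound on $\Omega_1,\Omega_2$ is not needed in this direction) are all consistent with the paper's argument.
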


\begin{proof} Suppose on the contrary that
\begin{equation} \label{equ:48.1}
\|\s^{k+1} \psi_1(u)\|=\|\s^k \psi_1(u)\| \ \text{but} \
\|\s^{k+1} \psi_1(v)\| \neq \|\s^k \psi_1(v)\|
\end{equation}
for some $\psi_1 \in \Omega_1$. (The treatment of the other cases
is similar.) Put
\[
K=\|\s^{k+1} \psi_1(v)\|-\|\s^k \psi_1(v)\|.
\]
By Lemma ~\ref{lem:4.3} ~(i) and the second inequality of
(\ref{equ:48.1}), we have $K \ge 1$. By repeatedly applying Lemma
~\ref{lem:4.1} ~(i), we deduce that
\[
\begin{aligned}
&\|\s^{i+1} \psi_1(u)\|=\|\s^k \psi_1(u)\| \ \text{for every} \ i
\ge k;\\
&\|\s^{i+1} \psi_1(v)\|=\|\s^k \psi_1(v)\|+K(i+1-k) \ \text{for
every} \ i \ge k.
\end{aligned}
\]
Hence
\[
{\|\s^{i+1} \psi_1(u)\| \over \|\s^{i+1} \psi_1(v)\|}={\|\s^k
\psi_1(u)\| \over \|\s^k \psi_1(v)\|+K(i+1-k)}
\]
for every $i \ge k$, and thus
\[
\lim_{i \to \infty} {\|\s^{i+1} \psi_1(u)\| \over \|\s^{i+1}
\psi_1(v)\|}=0.
\]
This contradiction to the the hypothesis that $u$ and $v$ are
boundedly translation equivalent in $F_2$ completes the proof.
\end{proof}

Consequently, in view of Theorems ~\ref{thm:4.7} and
~\ref{thm:4.8}, we obtain the following algorithm to determine
bounded translation equivalence in $F_2$.

\begin{algorithm} Let $u, v \in F_2$ with $\|u\| \ge \|v\|$, and
let $\Omega, \Omega_1$ and $\Omega_2$ be defined as in the
statement of Theorem ~\ref{thm:4.7}. Put $k=\|u\|+1$. Check if it
is true that
\[
\begin{aligned}
&\|\s^{k+1} \psi_1(u)\|=\|\s^k \psi_1(u)\| \ \text{if and only if}
\ \|\s^{k+1} \psi_1(v)\|=\|\s^k \psi_1(v)\|; \\
&\|\t^{k+1} \psi_1(u)\|=\|\t^k \psi_1(u)\| \ \text{if and only if}
\ \|\t^{k+1} \psi_1(v)\|=\|\t^k \psi_1(v)\|,
\end{aligned}
\]
for each $\psi_1 \in \Omega_1$, and if it is true that
\[
\begin{aligned}
\|\s^{-k-1} \psi_2(u)\|=\|\s^{-k} \psi_2(u)\| \ \text{if and only
if} \ \|\s^{-k-1} \psi_2(v)\|=\|\s^{-k} \psi_2(v)\|;\\
\|\t^{-k-1} \psi_2(u)\|=\|\t^{-k} \psi_2(u)\| \ \text{if and only
if} \ \|\t^{-k-1} \psi_2(v)\|=\|\t^{-k} \psi_2(v)\|,
\end{aligned}
\]
for each $\psi_2 \in \Omega_2$. If so, conclude that $u$ and $v$
are boundedly translation equivalent in $F_2$; otherwise conclude
that $u$ and $v$ are not boundedly translation equivalent in
$F_2$.
\end{algorithm}

\section{Fixed point groups of automorphisms of $F_2$}

In this section, we shall demonstrate that there exists an
algorithm to decide whether or not a given finitely generated
subgroup of $F_2$ is the fixed point group of some automorphism of
$F_2$. If $H=\langle u_1, \dots, u_k \rangle$ is a finitely
generated subgroup of $F_2$, then we define
$$|H|:=\max_{1 \le i \le k} |u_i|.$$
Clearly $\|u_i\| \le |u_i| \le |H|$ for every $i=1, \dots, k$.

\begin{theorem} \label{pro:5.3} Let $H=\langle u_1, \dots, u_k \rangle$ be a finitely generated
subgroup of $F_2$. Suppose that $\phi$ is a chain of type (C1)
with $|\phi| \ge 4|H|+5$ such that $\|\phi(u_i)\|=\|u_i\|$ for
every $i=1, \dots, k$. Then there exists a chain $\psi$ of type
(C1) with $|\psi|<|\phi|$ such that $[\psi(u_i)]=[\phi(u_i)]$ for
every $i=1, \dots, k$.
\end{theorem}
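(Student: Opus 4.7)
\medskip
\noindent\emph{Proof plan.} The strategy is to show that when $|\phi| \geq 4|H|+5$ and $\phi$ preserves the cyclic length of every generator $u_i$, $\phi$ must contain a factor that acts as the identity on the intermediate cyclic words of all $u_i$ simultaneously; deleting that factor yields the required shorter chain $\psi$ of type (C1).

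I begin with a pigeonhole observation: since $|\phi| \geq 4|H|+5$ and $\|u_i\| \leq |H|$ for every $i$, at least one of $\s$ and $\t$ occurs at least $2|H|+3$ times in $\phi$. By the symmetry between $\s$ and $\t$ noted before Corollary~\ref{cor:4.4.5}, I may assume $\s$ occurs at least $2|H|+3$ times. I then decompose $\phi = \phi_2 \phi_1$, where $\phi_1$ is the shortest right-suffix of $\phi$ that is a chain of type (C1) containing exactly $|H|+1$ factors of $\s$; then $\phi_2$ is a chain of type (C1) containing at least $|H|+2$ factors of $\s$.

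Because $\phi_1$ contains $\ge |H|+1 \ge \|u_i\|+1$ factors of $\s$, and every chain obtained by prepending factors of $\phi_2$ to $\phi_1$ inherits this property, Lemma~\ref{lem:4.3} implies that the cyclic length along the application of $\phi_2$ to $\phi_1(u_i)$ is non-decreasing for each $u_i$. Combined with $\|\phi(u_i)\| = \|u_i\|$, this forces $\|\phi_1(u_i)\| \leq \|u_i\|$ and constrains the total length growth along $\phi_2$ to be at most $\|u_i\| \leq |H|$ for each $u_i$.

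The heart of the argument is to exploit the rigidity imposed by length preservation. I would use Lemmas~\ref{lem:4.3.3} and~\ref{lem:4.3.5} to propagate length preservation forward: once both $\s$- and $\t$-applications preserve the cyclic length of $[\psi'(u_i)]$ at some intermediate stage $\psi'$ of $\phi_2$, they continue to preserve it at all subsequent stages. More importantly, the structural argument used in the Claim within the proof of Corollary~\ref{cor:4.5}(iii) shows that at such a stage, the no-proper-cancellation property from Lemma~\ref{lem:2.4} together with length preservation forces $[\psi'(u_i)]$ into the rigid alternating form $[a^{\epsilon_i} b a^{-\epsilon_i} b^{-1} \cdots a^{\epsilon_i} b a^{-\epsilon_i} b^{-1}]$, on which both $\s$ and $\t$ act as the identity on cyclic words. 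Since $|\phi_2|$ is large compared to the total possible length growth, such a saturation stage must be reached for every $u_i$ within $\phi_2$, and from that stage onwards every remaining factor of $\phi$ acts as the identity on every $[\psi'(u_i)]$.

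Finally, deleting any factor of $\phi$ from beyond the latest such saturation stage yields a chain $\psi$ of type (C1) with $|\psi|=|\phi|-1<|\phi|$ and $[\psi(u_i)]=[\phi(u_i)]$ for every $i$. The main obstacle will be controlling the saturation stage uniformly across $u_1,\dots,u_k$: one must argue that once length preservation occurs for both $\s$ and $\t$ at a single intermediate stage, rigidity forces all subsequent factors to act as the identity, so that a globally identity-acting factor exists within $\phi_2$ despite the generators possibly saturating at different stages. The bound $4|H|+5$ is calibrated precisely so that $\phi_2$ has enough of both $\s$- and $\t$-factors to guarantee both propagation and rigidity for all generators simultaneously.
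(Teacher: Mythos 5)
Your setup---the pigeonhole between $\s$ and $\t$, splitting $\phi$ at the point where about $|H|+1$ factors of $\s$ have accumulated, and using Lemma~\ref{lem:4.3} to get monotonicity of cyclic length along $\phi_2$ and hence a total growth of at most $\|u_i\|\le|H|$---matches the paper's. The gap is the ``saturation stage.'' Bounded total growth along $\phi_2$ only guarantees many individual steps with zero increment, and each such step certifies length preservation only for whichever single generator ($\s$ or $\t$) is actually applied there; it does not produce a stage $\psi'$ at which \emph{both} $\|\s\psi'(u_i)\|=\|\psi'(u_i)\|$ and $\|\t\psi'(u_i)\|=\|\psi'(u_i)\|$ hold, which is what you need before you can invoke the rigid commutator form from the Claim inside Corollary~\ref{cor:4.5}(iii). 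Indeed no such stage need exist: take $u=b^2ab^{-1}a^{-1}$ (so $|H|=5$) and $\phi=\s^{4|H|+5}$. Then $[\s(u)]=[u]$, so the hypothesis $\|\phi(u)\|=\|u\|$ holds, yet $\|\t\s^{j}(u)\|>\|\s^{j}(u)\|$ at every stage, and $\phi_2$ contains no $\t$-factors at all, so your claim that the bound $4|H|+5$ guarantees ``enough of both $\s$- and $\t$-factors'' in $\phi_2$ is false. Your construction finds nothing to delete here, even though the theorem holds with $\psi=\s^{4|H|+4}$. (The forward-propagation step via Lemmas~\ref{lem:4.3.3} and~\ref{lem:4.3.5} is also not justified across an application of the \emph{other} generator.)

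The missing idea is that you only need $\s$ to act as the identity on a cyclic word, not both generators. The paper writes $\phi=\t^{m_t}\s^{\ell_t}\cdots\t^{m_1}\s^{\ell_1}\phi'$ with $\phi'$ containing exactly $|H|+2$ factors of $\s$, and argues quantitatively: if $K=\|\s\phi'(u_j)\|-\|\phi'(u_j)\|\ge 1$ for some $j$, then Lemmas~\ref{lem:4.1} and~\ref{lem:4.3} show that each of the remaining $\sum\ell_i\ge|H|+1$ factors of $\s$ (and each factor of $\t$ as well) increases the cyclic length by at least $K$, whence $\|\phi(u_j)\|\ge\|\phi'(u_j)\|+\|u_j\|+1>\|u_j\|$, contradicting $\|\phi(u_j)\|=\|u_j\|$. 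Hence $\|\s\phi'(u_i)\|=\|\phi'(u_i)\|$ for all $i$ simultaneously, and the counting identity $0=n([\phi'(u_i)];a,a)+n([\phi'(u_i)];a,b)-n([\phi'(u_i)];a,b^{-1})$ combined with the no-proper-cancellation consequence of Lemma~\ref{lem:2.4} forces $[\phi'(u_i)]=[b^{s_{i1}}ab^{t_{i1}}a^{-1}\cdots b^{s_{ir}}ab^{t_{ir}}a^{-1}]$, a form fixed by $\s$ alone as a cyclic word (but in general not by $\t$, as the example above shows). Deleting the single factor of $\s$ immediately following $\phi'$ then gives the required $\psi$. You should replace the two-generator saturation argument by this one-generator growth argument.
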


\begin{proof} Since $\phi$ is a chain of type (C1)
with $|\phi| \ge 4|H|+5$, $\phi$ contains at least $2|H|+3$
factors of $\s$ or $\t$. Suppose that $\phi$ contains at least
$2|H|+3$ factors of $\s$ (the other case is similar). We may write
\begin{equation} \label{equ:5.0}
\phi=\t^{m_t} \s^{\ell_t}  \cdots \t^{m_1} \s^{\ell_1} \phi',
\end{equation}
where all $\ell_i, m_i > 0$ but $\ell_1$ and $m_t$ may be zero,
and $\phi'$ is a chain of type (C1) which contains exactly $|H|+2$
factors of $\s$.

Suppose that there exists $u_j$ ($1 \le j \le k$) such that $\| \s
\phi'(u_j)\| \neq \|\phi'(u_j)\|$. Put
$$K=\| \s \phi'(u_j)\|-\|\phi'(u_j)\|.$$
Since $\phi'$ contains at least $\|u_j\|+2$ factors of $\s$, by
Lemma \ref{lem:4.3} ~(i), $K \ge 1$. Furthermore, since $\phi$
contains at least $2|H|+3$ factors of $\s$ and $\phi'$ contains
exactly $|H|+2$ factors of $\s$,
\begin{equation} \label{equ:5.1.7}
\sum_{i=1}^t \ell_i \ge |H|+1 \ge \|u_j\|+1.
\end{equation}

From the following claim, we shall obtain a contradiction.

\medskip
\noindent {\bf Claim.} $\|\phi(u_j)\|-\|\phi'(u_j)\| \ge
\|u_j\|+1$.
\medskip

\noindent {\it Proof of the Claim.} First assume that $m_1=0$ in
(\ref{equ:5.0}). Then $\phi=\s^{\ell_1}\phi'$, and so, from
(\ref{equ:5.1.7}), $\ell_1 \ge \|u_j\|+1$. By repeatedly applying
Lemma ~\ref{lem:4.1} ~(i), we have
$$\|\phi(u_j)\|-\|\phi'(u_j)\|=\ell_1K.$$
Since $K \ge 1$, it follows that
$$\|\phi(u_j)\|-\|\phi'(u_j)\|\ge \ell_1 \ge \|u_j\|+1,$$ as desired.

Next assume that $m_1 >0$ in (\ref{equ:5.0}). In view of Lemmas
~\ref{lem:4.1} and \ref{lem:4.3}, we can observe that
\[
\begin{aligned}
\|\s^{\ell_1} \phi'(u_j)\|-\|\phi'(u_j)\|&=\ell_1K; \\
\|\t^{m_1} \s^{\ell_1} \phi'(u_j)\|-\|\s^{\ell_1} \phi'(u_j)\|
&\ge m_1K; \\
&\cdots  \\
\|\s^{\ell_t}  \cdots \t^{m_1} \s^{\ell_1}
\phi'(u_j)\|-\|\t^{m_{t-1}}\cdots \t^{m_1}
\s^{\ell_1}\phi'(u_j)\| &\ge \ell_tK; \\
\|\t^{m_t} \s^{\ell_t}  \cdots \t^{m_1} \s^{\ell_1}
\phi'(u_j)\|-\|\s^{\ell_t} \cdots \t^{m_1} \s^{\ell_1}\phi'(u_j)\|
&\ge m_tK.
\end{aligned}
\]
Summing up all of these inequalities together with
(\ref{equ:5.1.7}) yields
\[
\begin{aligned}
\|\phi(u_j)\|-\|\phi'(u_j)\| &\ge \sum_{i=1}^t(\ell_i+m_i) K \\
 &\ge (\sum_{i=1}^t \ell_i) K \\
 & \ge \sum_{i=1}^t \ell_i \\
 & \ge \|u_j\|+1,
\end{aligned}
\]
as required. This completes the proof of the claim. \qed

It then follows from the claim that
$$\|\phi(u_j)\| \ge \|\phi'(u_j)\| + \|u_j\|+1 \ge \|u_j\|+1.$$
But this yields a contradiction to the hypothesis that
$\|\phi(u_j)\|=\|u_j\|$. Therefore, we must have $\| \s
\phi'(u_i)\|=\|\phi'(u_i)\|$ for every $i=1, \dots, k$. Then for
each $i=1, \dots, k$,
\begin{equation} \label{equ:5.1.8}
\begin{split}
0=\|\s
\phi'(u_i)\|-\|\phi'(u_i)\|&=n([\phi'(u_i)];a)-2n([\phi'(u_i);
a,b^{-1})
\\
&=n([\phi'(u_i)];a, a)+n([\phi'(u_i)];a, b)-n([\phi'(u_i)];a,
b^{-1}).
\end{split}
\end{equation}
Here, since $\phi'$ contains at least $\|u_i\|+2$ factors of $\s$,
by Lemma ~\ref{lem:2.4}, there cannot occur proper cancellation in
passing from $[\phi'(u_i)]$ to $[\s \phi'(u_i)]$, and so every
subword of $[\phi'(u_i)]$ of the form $ab^{-1}$ or $ba^{-1}$ is
necessarily part of a subword of the form $ab^{-r}a^{-1}$ or
$ab^ra^{-1}$ ($r
>0$), respectively. This implies that
\[
n([\phi'(u_i)];a, b) \ge n([\phi'(u_i)];a, b^{-1}),
\]
so that, from (\ref{equ:5.1.8}),
\begin{equation} \label{equ:5.1.9}
n([\phi'(u_i)];a, b) = n([\phi'(u_i)];a, b^{-1}) \ \text{and} \
n([\phi'(u_i)];a, a)=0.
\end{equation}
From the fact that no proper cancellation can occur in passing
from $[\phi'(u_i)]$ to $[\s \phi'(u_i)]$ together with
(\ref{equ:5.1.9}), each cyclic word $[\phi'(u_i)]$ must have the
form
$$
[\phi'(u_i)]=[b^{s_{i1}}ab^{t_{i1}}a^{-1} \cdots
b^{s_{ir}}ab^{t_{ir}}a^{-1}],
$$
where every $s_{ij}, t_{ij}$ is a nonzero integer, and hence
$$[\s \phi'(u_i)]=[\phi'(u_i)]$$ for every $i=1, \dots, t$.

Thus letting
$$\psi=\t^{m_t} \s^{\ell_t}  \cdots \t^{m_1} \s^{\ell_1-1}
\phi',$$ we finally have
$$[\psi(u_i)]=[\phi(u_i)]$$ for every $i=1, \dots, t$. Obviously $|\psi|<|\phi|$,
and so the proof of the theorem is completed.
\end{proof}

We remark that Theorem \ref{pro:5.3} also holds if (C1) is
replaced by (C2). From now on, let
\[
\d_1=(\{a^{\pm 1}\}, b), \quad \d_2=(\{a^{\pm 1}\}, b^{-1}), \quad
\d_3=(\{b^{\pm 1}\}, a),  \quad \d_4=(\{b^{\pm 1}\}, a^{-1})
\]
be Whitehead automorphisms of $F_2$ of type (W2).

\begin{lemma} \label{lem:5.0.5} Let $\a$ be a Whitehead automorphism of
$F_2$ of type (W2). Then $\a$ can be expressed as a composition of
$\s^{\pm 1}$, $\t^{\pm 1}$ and $\d_i$'s.
\end{lemma}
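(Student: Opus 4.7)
The plan is to reduce the lemma to a finite case check. In $F_2$ the defining letter $x$ of a type (W2) Whitehead automorphism $(S,x)$ ranges over the four letters in $\{a^{\pm 1},b^{\pm 1}\}$, and for each choice of $x$ the defining set $S\subset\{a^{\pm 1},b^{\pm 1}\}\setminus\{x,x^{-1}\}$ has exactly three possibilities---the positive letter alone, the negative letter alone, or both. Hence there are only $4\times 3=12$ type (W2) automorphisms of $F_2$ to exhibit.

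First I would point out that six of the twelve are already among the named generators, namely $\s=(\{a\},b)$, $\t=(\{b\},a)$, and $\d_1,\d_2,\d_3,\d_4$. Next, applying the rules of (W2) directly to the images of $a$ and $b$, one checks that $\s^{-1}=(\{a\},b^{-1})$ and $\t^{-1}=(\{b\},a^{-1})$, which accounts for two more cases.

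This leaves the four automorphisms $(\{a^{-1}\},b)$, $(\{a^{-1}\},b^{-1})$, $(\{b^{-1}\},a)$, and $(\{b^{-1}\},a^{-1})$. For each of these I would propose a two-factor product and verify the equality on the generators $a,b$; concretely I expect
\[
(\{a^{-1}\},b)=\d_1\s^{-1},\quad (\{a^{-1}\},b^{-1})=\d_2\s,\quad (\{b^{-1}\},a)=\d_3\t^{-1},\quad (\{b^{-1}\},a^{-1})=\d_4\t.
\]
For instance $\d_1\s^{-1}(a)=\d_1(ab^{-1})=b^{-1}ab\cdot b^{-1}=b^{-1}a$ and $\d_1\s^{-1}(b)=b$, matching $(\{a^{-1}\},b)$ after one reads that automorphism off from its defining rules; the other three cases are analogous one-line computations. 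There is no genuine obstacle here---the only mild point requiring care is the interpretation of $(S,x)$ when $c\notin S$ but $c^{-1}\in S$, a case not explicitly listed among rules (a)--(c), but forced by $\a$ being a homomorphism to give $\a(c)=x^{-1}c$.
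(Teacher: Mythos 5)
Your proposal is correct and follows essentially the same route as the paper: after observing that $\s^{\pm1},\t^{\pm1},\d_1,\dots,\d_4$ already account for eight of the twelve type (W2) automorphisms of $F_2$, the paper disposes of the remaining four via exactly the identities you list, $(\{a^{-1}\},b)=\d_1\s^{-1}$, $(\{a^{-1}\},b^{-1})=\d_2\s$, $(\{b^{-1}\},a)=\d_3\t^{-1}$, $(\{b^{-1}\},a^{-1})=\d_4\t$. Your verification on the generators is the same "easy identities" check the paper invokes.
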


\begin{proof} If $\a$ is not one of $\s^{\pm 1}$, $\t^{\pm 1}$ and $\d_i$'s,
then $\a$ must be one of $(\{a^{-1}\}, b)$, $(\{a^{-1}\},
b^{-1})$, $(\{b^{-1}\}, a)$ and $(\{b^{-1}\}, a^{-1})$. Then the
following easy identities
\[
\begin{aligned}
(\{a^{-1}\}, b)=\d_1 \s^{-1}; &\qquad (\{a^{-1}\}, b^{-1})=\d_2
\s;
\\
(\{b^{-1}\}, a)=\d_3 \t^{-1}; &\qquad (\{b^{-1}\}, a^{-1})=\d_4
\t
\end{aligned}
\]
imply the required result.
\end{proof}

The following two technical lemmas can be easily proved by direct
calculations.

\begin{lemma} \label{lem:5.1} The following identities hold.
\[
\begin{aligned}
\s \d_1 &= \d_1 \s; & \s \d_2 &= \d_2 \s; & \s \d_3 &= \d_1
\d_3 \s; & \s \d_4 &= \d_4 \d_2 \s;\\
\t \d_1 &= \d_3 \d_1 \t; & \t \d_2 &= \d_2 \d_4 \t; & \t \d_3
&= \d_3 \t; & \t \d_4 &= \d_4 \t;\\
\s^{-1} \d_1 &= \d_1 \s^{-1}; & \s^{-1} \d_2 &= \d_2 \s^{-1}; &
\s^{-1} \d_3 &= \d_2
\d_3 \s^{-1}; & \s^{-1} \d_4 &= \d_4 \d_1 \s^{-1};\\
\t^{-1} \d_1 &= \d_4 \d_1 \t^{-1}; & \t^{-1} \d_2 &= \d_2 \d_3
\t^{-1}; & \t^{-1} \d_3 &= \d_3 \t^{-1}; & \t^{-1} \d_4 &= \d_4
\t^{-1}.
\end{aligned}
\]
\end{lemma}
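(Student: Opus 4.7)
The plan is to verify each of the sixteen identities by evaluating both sides on the basis elements $a$ and $b$ of $F_2$: since any endomorphism of a free group is determined by its action on a basis, each assertion reduces to a pair of short word computations in $F_2$, compared after free reduction.

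The first step is to record the explicit action of the basic automorphisms on $a$ and $b$. From the definition of type (W2) one reads off
\[
\begin{aligned}
\s(a)&=ab, & \s(b)&=b, & \t(a)&=a, & \t(b)&=ba,\\
\d_1(a)&=b^{-1}ab, & \d_1(b)&=b, & \d_2(a)&=bab^{-1}, & \d_2(b)&=b,\\
\d_3(a)&=a, & \d_3(b)&=a^{-1}ba, & \d_4(a)&=a, & \d_4(b)&=aba^{-1},
\end{aligned}
\]
and correspondingly $\s^{-1}(a)=ab^{-1}$, $\s^{-1}(b)=b$, $\t^{-1}(a)=a$, $\t^{-1}(b)=ba^{-1}$.

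I then go through the sixteen identities one at a time. For each, I apply the two compositions in turn to $a$ and to $b$, carry out the free reductions, and confirm that the two sides produce the same reduced word. For example, to check $\s\d_3=\d_1\d_3\s$ on $a$, the left side is $\s(a)=ab$ while the right side is $\d_1\d_3(ab)=\d_1(a\cdot a^{-1}ba)=\d_1(ba)=b\cdot b^{-1}ab=ab$; on $b$, both sides reduce to $b^{-1}a^{-1}bab$. The remaining fifteen identities are handled in exactly this style, with the four identities in each row of the statement sharing a common pattern dictated by which of $a,b$ is fixed by the relevant $\d_i$ and which is conjugated.

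The only obstacle is bookkeeping: sixteen identities, each requiring two word computations with careful attention to the inner conjugation from rule (b) of the (W2) definition and to the free reductions that follow. There is no conceptual difficulty, which is why the author presents this as a direct calculation.
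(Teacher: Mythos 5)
Your proposal is correct and is exactly the argument the paper intends: the paper offers no written proof beyond the remark that the identities "can be easily proved by direct calculations," and your verification on the basis $\{a,b\}$ (which suffices since an automorphism of $F_2$ is determined by its values on $a$ and $b$) is that calculation, with the tabulated actions of $\s^{\pm1},\t^{\pm1},\d_1,\dots,\d_4$ all read off correctly from the (W2) definition. The sample computation for $\s\d_3=\d_1\d_3\s$ checks out on both generators, so nothing further is needed.
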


\begin{lemma} \label{lem:5.1.5} The following identities hold.
\[
\begin{aligned}
\s \t^{-1}&= \pi \d_1 \s^{-1}; & \s^{-1} \t &= \pi^{-1} \d_3 \s; &
\t \s^{-1} &= \pi^{-1} \d_3 \t^{-1}; & \t^{-1} \s &= \pi
\d_1 \t;\\
\s \pi &=\pi \d_3 \t^{-1}; & \s \pi^{-1}&=\pi^{-1} \t^{-1}; &
\s^{-1} \pi &=\pi \d_4 \t; & \s^{-1} \pi^{-1} &=\pi^{-1} \t; \\
\t \pi &=\pi \s^{-1}; & \t \pi^{-1} &=\pi^{-1} \d_1 \s^{-1}; &
\t^{-1} \pi &=\pi \s; & \t^{-1} \pi^{-1} &=\pi^{-1} \d_2 \s,
\end{aligned}
\]
where $\pi$ is a Whitehead automorphism of $F_2$ of type (W1) that
sends $a$ to $b$ and $b$ to $a^{-1}$.
\end{lemma}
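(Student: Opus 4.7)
The strategy is purely computational. Each side of each of the twelve identities is an automorphism of $F_2$, so the identity holds as soon as both sides agree on the free basis $\{a,b\}$. Thus the plan is to tabulate the action of every atomic ingredient on $\{a^{\pm 1},b^{\pm 1}\}$, and then to carry out each identity as a pair of two-letter checks, reading right-hand compositions from the innermost factor outward and freely reducing.

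First I would write down the basic table from the definitions. From $\sigma=(\{a\},b)$ and $\tau=(\{b\},a)$ we have $\sigma(a)=ab$, $\sigma(b)=b$, $\sigma^{-1}(a)=ab^{-1}$, $\sigma^{-1}(b)=b$, $\tau(a)=a$, $\tau(b)=ba$, $\tau^{-1}(a)=a$, $\tau^{-1}(b)=ba^{-1}$. From the four type-(W2) automorphisms of case (b) introduced just before Lemma \ref{lem:5.0.5}, $\delta_1(a)=b^{-1}ab$, $\delta_1(b)=b$; $\delta_2(a)=bab^{-1}$, $\delta_2(b)=b$; $\delta_3(a)=a$, $\delta_3(b)=a^{-1}ba$; $\delta_4(a)=a$, $\delta_4(b)=aba^{-1}$. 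Finally $\pi(a)=b$ and $\pi(b)=a^{-1}$, so $\pi(a^{-1})=b^{-1}$, $\pi(b^{-1})=a$, while $\pi^{-1}(a)=b^{-1}$, $\pi^{-1}(b)=a$, $\pi^{-1}(a^{-1})=b$, $\pi^{-1}(b^{-1})=a^{-1}$.

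With this table, each identity reduces to a short calculation. As a representative case take $\sigma\tau^{-1}=\pi\delta_1\sigma^{-1}$: on the left, $\sigma\tau^{-1}(a)=\sigma(a)=ab$ and $\sigma\tau^{-1}(b)=\sigma(ba^{-1})=b\cdot b^{-1}a^{-1}=a^{-1}$; on the right, $\sigma^{-1}(a)=ab^{-1}$ gives $\delta_1\sigma^{-1}(a)=b^{-1}ab\cdot b^{-1}=b^{-1}a$ and then $\pi\delta_1\sigma^{-1}(a)=\pi(b^{-1}a)=a\cdot b=ab$, while $\sigma^{-1}(b)=b$ gives $\pi\delta_1\sigma^{-1}(b)=\pi(b)=a^{-1}$, matching. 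Similarly for $\tau\pi=\pi\sigma^{-1}$: the left side yields $\tau(b)=ba$ and $\tau(a^{-1})=a^{-1}$, and the right side yields $\pi(ab^{-1})=ba$ and $\pi(b)=a^{-1}$.

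The remaining ten identities are handled by exactly the same mechanism, giving $12\times 2=24$ two-generator evaluations in total. There is no genuine obstacle; the only difficulty is bookkeeping, and the two points to watch are which generator is the defining letter of each $\delta_i$ (and hence which of $a$ or $b$ gets conjugated, and by what sign of that letter) and the sign-flip pattern of $\pi^{\pm 1}$ on $\{a^{\pm 1},b^{\pm 1}\}$. With the table above in hand, each identity is a three-line check.
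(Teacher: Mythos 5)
Your proposal is correct and is exactly the approach the paper intends: the paper gives no written proof, stating only that the identities ``can be easily proved by direct calculations,'' and your tabulation of the actions of $\s^{\pm1},\t^{\pm1},\d_i,\pi^{\pm1}$ on $\{a,b\}$ followed by the $24$ two-generator checks is that calculation. Your sample verifications (e.g.\ $\s\t^{-1}=\pi\d_1\s^{-1}$ and $\t\pi=\pi\s^{-1}$) are carried out correctly under the paper's conventions for type-(W2) automorphisms and for right-to-left composition.
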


The following corollary gives a nice description of automorphisms
of $F_2$.

\begin{corollary} \label{cor:5.2} Every automorphism $\phi$ of $F_2$
can be represented as
\[
\phi = \b \d \phi',
\]
where $\b$ is a Whitehead automorphism of $F_2$ of type (W1), $\d$
is a composition of $\d_i$'s, and $\phi'$ is a chain of type (C1)
or (C2).
\end{corollary}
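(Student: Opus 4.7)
The plan is to start from Lemma~\ref{lem:2.3}, which yields a decomposition $\phi \equiv \b\phi'$ that is \emph{a priori} valid only over cyclic words, and then to absorb the ambiguity between $\equiv$ and genuine equality into a product of the $\d_i$'s, sandwiched between $\b$ and $\phi'$.

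First, I would invoke Lemma~\ref{lem:2.3} to choose a Whitehead automorphism $\b$ of type (W1) and a chain $\phi'$ of type (C1) or (C2) with $\phi \equiv \b\phi'$. Since $\equiv$ means that $\phi$ and $\b\phi'$ agree on every cyclic word, the automorphism $\phi\cdot(\b\phi')^{-1}$ sends every element of $F_2$ to a conjugate of itself and is therefore inner; hence there exists $c \in F_2$ with $\phi = \pi_c\, \b\, \phi'$, where $\pi_c$ denotes the inner automorphism $w \mapsto c^{-1}wc$.

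Next, I would push $\pi_c$ past $\b$. A short direct computation using the definition of $\pi_c$ yields the identity $\pi_c\b = \b\,\pi_{\b^{-1}(c)}$; setting $c' := \b^{-1}(c)$, this gives $\phi = \b\,\pi_{c'}\,\phi'$.

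Finally, I would express $\pi_{c'}$ as a composition of $\d_i$'s. Writing $c' = x_1 x_2 \cdots x_n$ as a (reduced) word in $\{a^{\pm 1}, b^{\pm 1}\}$, an easy induction on $n$ shows $\pi_{c'} = \pi_{x_n}\pi_{x_{n-1}}\cdots\pi_{x_1}$, and one checks directly from the definitions that $\pi_b = \d_1$, $\pi_{b^{-1}} = \d_2$, $\pi_a = \d_3$, and $\pi_{a^{-1}} = \d_4$. Letting $\d$ denote this composition of $\d_i$'s, we arrive at $\phi = \b\,\d\,\phi'$, which is the required form. The only conceptually non-routine step is the first one, where one must appeal to the classical fact that every automorphism of $F_2$ fixing every conjugacy class is inner; everything else is a direct manipulation exploiting the realization of the $\d_i$'s as inner automorphisms by the basis letters.
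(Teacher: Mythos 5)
Your proof is correct, but it takes a genuinely different route from the paper's. The paper does not use Lemma~\ref{lem:2.3} at all here: it starts from Whitehead's theorem to write $\phi$ as a product of Whitehead automorphisms, uses Lemma~\ref{lem:5.0.5} to replace each type-(W2) factor by a word in $\s^{\pm1}$, $\t^{\pm1}$ and the $\d_i$'s, pushes all the $\d_i$'s (and the type-(W1) factors) to the left with the commutation identities of Lemma~\ref{lem:5.1}, and finally applies Lemma~\ref{lem:5.1.5} repeatedly to force all exponents of $\s,\t$ to carry the same sign, absorbing the resulting powers of $\pi$ into $\b$. Your argument instead quantifies exactly what the relation $\equiv$ in Lemma~\ref{lem:2.3} forgets: the discrepancy $\phi(\b\phi')^{-1}$ preserves every conjugacy class, hence is inner, and the four $\d_i$ are precisely the inner automorphisms $\pi_b,\pi_{b^{-1}},\pi_a,\pi_{a^{-1}}$ by the basis letters (this identification, and the identity $\pi_c\b=\b\,\pi_{\b^{-1}(c)}$, check out against the paper's conventions for (W2) automorphisms). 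The one step you should flag more carefully is ``pointwise inner implies inner'': this is a genuine theorem (Grossman), not a formality, though for rank $2$ it admits an elementary proof --- a conjugacy-class-preserving automorphism induces the identity on the abelianization $\mathbb{Z}^2$, and $\mathrm{Out}(F_2)\to\mathrm{GL}_2(\mathbb{Z})$ is injective by Nielsen's theorem, so such an automorphism is inner. As for what each approach buys: the paper's is self-contained modulo Whitehead's theorem and reuses the identity toolkit (Lemmas~\ref{lem:5.1} and~\ref{lem:5.1.5}) that Section~4 develops anyway; yours is shorter, explains conceptually why the correction term is a product of $\d_i$'s (it is an inner automorphism), and in fact proves slightly more, namely that $\d$ can always be taken to be inner, whereas the paper's normalization yields an a priori arbitrary composition of $\d_i$'s.
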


\begin{proof} By Whitehead's Theorem (cf. \cite{whitehead}) together with
Lemmas ~\ref{lem:5.0.5} and ~\ref{lem:5.1}, an automorphism $\phi$
of $F_2$ can be expressed as
\begin{equation} \label{equ:5.2}
\phi = \b' \d' \t^{q_t}\s^{p_t} \cdots \t^{q_1}\s^{p_1},
\end{equation}
where $\b'$ is a Whitehead automorphism of $F_2$ of type (W1),
$\d'$ is a composition of $\d_i$'s, and both $p_j, \, q_j$ are
(not necessarily positive) integers for every $j=1, \dots, t$. If
not every $p_j$ and $q_j$ has the same sign (including $0$), apply
repeatedly Lemma ~\ref{lem:5.1.5} to the chain on the right-hand
side of (\ref{equ:5.2}) to obtain that either $\phi = \b'\pi^r \d
\t^{m_k}\s^{l_k} \cdots \t^{m_1}\s^{l_1}$ or $\phi = \b'\pi^r \d
\t^{-m_k}\s^{-l_k} \cdots \t^{-m_1}\s^{-l_1}$, where $\pi$ is as
in Lemma ~\ref{lem:5.1.5}, $r \in \Bbb Z$, $\d$ is a composition
of $\d_i$'s, and both $l_j, \, m_j \ge 0$ for every $j=1, \dots,
k$. Putting $\b=\b'\pi^r$, we obtain the required result.

\end{proof}

The following is the main result of this section.

\begin{theorem} \label{pro:5.4} Let $H=\langle u_1, \dots, u_k \rangle$
be a finitely generated subgroup of $F_2$. Suppose that $H$ is the
fixed point group of an automorphism $\phi$ of $F_2$. Let
$\Omega_1$ be the set of all chains of type (C1) or (C2) of length
less than or equal to $4|H|+4$, and let $\Omega_2$ be the set of
all compositions of $\d_i$'s of length less than or equal to
$(2^{4|H|+4}+1)|H|$. Put
$$\Omega=\{ \b \d' \psi' \, | \, \psi' \in \Omega_1, \d' \in
\Omega_2, \text{\it and $\b$ is a Whitehead auto of $F_2$ of type
(W1)}\}.$$ Then there exists $\psi \in \Omega$ of which $H$ is the
fixed point group.
\end{theorem}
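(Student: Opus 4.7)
\medskip
\noindent The plan is to start from the hypothesized $\phi$ and, through successive reductions of its canonical decomposition, produce an element $\psi\in\Omega$ whose fixed-point group remains $H$. By Corollary~\ref{cor:5.2} write $\phi=\b\,\d\,\phi_0$ with $\b$ of type (W1), $\d$ a composition of $\d_i$'s, and $\phi_0$ a chain of type (C1) or (C2); I treat (C1), the other case being symmetric. A direct computation shows each $\d_i$ is an inner automorphism of $F_2$ (e.g.\ $\d_1:a\mapsto b^{-1}ab,\ b\mapsto b$ is conjugation by $b^{-1}$, and similarly for $\d_2,\d_3,\d_4$), so $\d=\pi_c$ for some $c\in F_2$ and $\d$ acts trivially on cyclic words. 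Combined with the length-preservation of $\b$, the hypothesis $\phi(u_i)=u_i$ now forces $\|\phi_0(u_i)\|=\|u_i\|$ for every generator $u_i$ of $H$, making Theorem~\ref{pro:5.3} (and its evident (C2) analogue) applicable; iterating, I replace $\phi_0$ by a chain $\psi_0\in\Omega_1$ with $|\psi_0|\le 4|H|+4$ and $[\psi_0(u_i)]=[\phi_0(u_i)]$ for all $i$.

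\medskip
Write $\psi_0(u_i)=g_i\,\phi_0(u_i)\,g_i^{-1}$ for the conjugators arising from this cyclic equivalence, and note from $\phi(u_i)=u_i$ that $c\,\phi_0(u_i)\,c^{-1}=\b^{-1}(u_i)$, so each $\psi_0(u_i)$ is conjugate to $\b^{-1}(u_i)$. The core task is to exhibit a single $\tilde c\in F_2$ realizing $\tilde c\,\psi_0(u_i)\,\tilde c^{-1}=\b^{-1}(u_i)$ simultaneously for all $i$, with $|\tilde c|\le(2^{4|H|+4}+1)|H|$. The length bound follows from $|\psi_0(u_i)|\le 2^{|\psi_0|}|u_i|\le 2^{4|H|+4}|H|$ together with the standard estimate that a simultaneous conjugator between two tuples of cyclically reduced words of matching cyclic lengths has length controlled by the maximum word length. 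Existence of a \emph{uniform} $\tilde c$ rewrites as the condition $c^{-1}\tilde c\,g_i\in Z(\phi_0(u_i))$ for each $i$; since centralizers in $F_2$ are cyclic, this is a compatibility constraint on the $g_i$'s modulo cyclic subgroups, which I verify by explicit element-level analysis of each reduction step, using the rigid canonical form $[\phi'(u_i)]=[b^{s_{i1}}ab^{t_{i1}}a^{-1}\cdots]$ established in the proof of Theorem~\ref{pro:5.3} and the observation that $\sigma$ applied to any element-level rotation of this form differs from the original only by conjugation by a power of $b$.

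\medskip
Setting $\tilde\d:=\pi_{\tilde c}\in\Omega_2$ and $\psi:=\b\,\tilde\d\,\psi_0\in\Omega$, the construction yields $\psi(u_i)=u_i$ for all $i$, so $H\subseteq\mathrm{Fix}(\psi)$. For the reverse inclusion I exploit the remaining freedom in $\tilde c$ (coming from the cyclic centralizer slack identified above) to arrange $\psi=\phi$ as automorphisms of $F_2$, which reduces to verifying that $\psi_0\phi_0^{-1}$ becomes inner after absorption into $\tilde\d$; then $\mathrm{Fix}(\psi)=\mathrm{Fix}(\phi)=H$ is immediate. The main obstacle will be precisely the element-level bookkeeping of the conjugators $g_i$ through iterated applications of Theorem~\ref{pro:5.3}: one must show that the compatibility condition admits a solution $\tilde c$ within the advertised length bound and that the same $\tilde c$ additionally forces the inner-ness of $\psi_0\phi_0^{-1}$ needed to identify $\psi$ with $\phi$.
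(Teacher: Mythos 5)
Your skeleton coincides with the paper's: decompose $\phi=\b\,\d\,\phi_0$ via Corollary~\ref{cor:5.2}, use $\phi(u_i)=u_i$ to conclude $\|\phi_0(u_i)\|=\|u_i\|$, iterate Theorem~\ref{pro:5.3} to replace $\phi_0$ by a short chain $\psi_0\in\Omega_1$ with $[\psi_0(u_i)]=[\phi_0(u_i)]$, and then correct by a bounded composition of $\d_i$'s (your observation that each $\d_i$ is inner is correct and is what makes the first step work). The paper at the corresponding point simply asserts that some $\d'\in\Omega_2$ satisfies $\d'\psi_0(u_i)=\d\phi_0(u_i)$ for all $i$ simultaneously, and concludes once $\b\d'\psi_0(u_i)=u_i$ is verified. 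You have correctly identified that the existence of a \emph{single} conjugator $\tilde c$ working for all $i$ is the crux, but your proposal does not establish it: the ``explicit element-level analysis of each reduction step'' and the tracking of the conjugators $g_i$ through the iterated applications of Theorem~\ref{pro:5.3} are announced, not carried out, and the compatibility condition $c^{-1}\tilde c\,g_i\in Z(\phi_0(u_i))$ is exactly the statement that needs proof. As written, the central existence claim (and the length bound $(2^{4|H|+4}+1)|H|$ for $\tilde c$) remains unproved.

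Separately, your plan for the reverse inclusion $\mathrm{Fix}(\psi)\subseteq H$ cannot work as stated. You propose to choose $\tilde c$ so that $\psi=\phi$ as automorphisms, which requires $\psi_0\phi_0^{-1}$ to be inner. But Theorem~\ref{pro:5.3} only guarantees that $\psi_0$ and $\phi_0$ agree on the finitely many cyclic words $[u_1],\dots,[u_k]$, not that they differ by an inner automorphism. For instance, if $\phi_0=\s^N$ with $N$ large, the reduction strips off powers of $\s$, and $\s^{N-1}$ is not inner; $\psi$ and $\phi$ are then genuinely different automorphisms, and no choice of $\tilde c$ (of any length) makes them equal. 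So $\mathrm{Fix}(\psi)=\mathrm{Fix}(\phi)$ cannot be obtained by forcing $\psi=\phi$; if the equality of fixed subgroups (rather than the containment $H\subseteq\mathrm{Fix}(\psi)$) is to be proved, it needs a different argument. For what it is worth, the paper's own proof stops at $\psi(u_i)=u_i$ and asserts the existence of $\d'$ without detail, so you have put your finger on the genuinely delicate points --- but flagging them as ``the main obstacle'' is not the same as overcoming them.
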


\begin{proof} By Corollary ~\ref{cor:5.2}, $\phi$ can be written as
$$\phi=\b \d \phi',$$
where $\b, \d$ and $\phi'$ are indicated as in the statement of
Corollary ~\ref{cor:5.2}.

Since $\phi(u_i)=u_i$ for every $i=1, \dots, k$, it is easy to see
that
$$\|\phi'(u_i)\|=\|u_i\|$$
for every $i=1, \dots, k$. Then apply Theorem ~\ref{pro:5.3}
continuously to obtain $\psi' \in \Omega_1$ such that
$$[\psi'(u_i)]=[\phi'(u_i)]$$
for every $i=1, \dots, k$. Since $|\d
\phi'(u_i)|=|\phi(u_i)|=|u_i| \le |H|$ and $|\psi'(u_i)| \le
2^{4|H|+4}|u_i| \le 2^{4|H|+4}|H|$ for every $i=1, \dots, k$, we
must have $\d' \in \Omega_2$ such that
$$\d' \psi' (u_i)=\d \phi'(u_i)$$
for every $i=1, \dots, k$, and hence
$$\b \d' \psi' (u_i)=\b \d \phi' (u_i)=u_i$$
for every $i=1, \dots, k$. Therefore, letting $$\psi=\b \d'
\psi',$$ we finally have $\psi \in \Omega$ and that $H$ is the
fixed point subgroup of $\psi$. This completes the proof of the
theorem.
\end{proof}

In conclusion, we naturally derive from Theorem ~\ref{pro:5.4} the
following algorithm to decide whether or not a given finitely
generated subgroup of $F_2$ is the fixed point group of some
automorphism of $F_2$.

\begin{algorithm} Let $H=\langle u_1, \dots, u_k \rangle$ be a finitely generated
subgroup of $F_2$. Let $\Omega_1, \Omega_2$ and $\Omega$ be
defined as in the statement of Theorem ~\ref{pro:5.4}. Clearly
$\Omega$ is a finite set. Check if there is $\psi \in \Omega$ for
which $\psi(u_i)=u_i$ holds for every $i=1, \dots, k$. If so,
conclude that $H$ is the fixed point group of some automorphism of
$F_2$; otherwise conclude that $H$ is not the fixed point group of
any automorphism of $F_2$.
\end{algorithm}


\bibstyle{plain}
\bigskip

\end{document}